\documentclass[
final
]{dmtcs-episciences}


\usepackage[utf8]{inputenc}
\usepackage{subfigure}

%

\usepackage[round]{natbib}

\usepackage{
	amsmath,			
	amssymb,			
	enumerate,		    
	graphicx,			
	lastpage,			
	multicol,			
	multirow,			
	pifont,			    
}
\usepackage{amsfonts}
\usepackage{amsthm}
\usepackage{mathtools}
\DeclareMathOperator{\tw}{\mathrm{tw}}
\DeclareMathOperator{\diam}{\mathrm{diam}}
\DeclareMathOperator{\inv}{\mathrm{inv}}
\DeclareMathOperator{\dist}{\mathrm{dist}}

\newtheorem{theorem}{Theorem}[section]
\newtheorem{definition}[theorem]{Definition}
\newtheorem{lemma}[theorem]{Lemma}
\newtheorem{claim}[theorem]{Claim}

\newtheorem{proposition}[theorem]{Proposition}
\newtheorem{conjecture}[theorem]{Conjecture}

\author[Yichen Wang et al.]{Yichen Wang\affiliationmark{1}
  \and Haozhe Wang\affiliationmark{1}
  \and Yuxuan Yang\affiliationmark{2}\thanks{Corresponding Author}
  \and Mei Lu\affiliationmark{1}}

\title{Inversion diameter and treewidth}


\affiliation{
  Department of Mathematical Sciences, Tsinghua University, Beijing, China\\
  School of Mathematical Sciences, Beijing University of Posts and Telecommunications, Beijing, China}
\keywords{inversion diameter, orientation, treewidth}
\begin{document}
\publicationdata{vol. 28:2}{2026}{29}{10.46298/dmtcs.16074}{2025-07-20; 2025-07-20; 2025-11-26; 2026-02-27}{2026-05-01}

\maketitle
\begin{abstract}
  In an oriented graph $\overrightarrow{G}$, the inversion of a subset $X$ of vertices is the operation that reverses the orientation of all arcs with both end-vertices in $X$. The inversion graph of a graph $G$, denoted by $\mathcal{I}(G)$, is the graph whose vertices are orientations of $G$ in which two orientations $\overrightarrow{G_1}$ and $\overrightarrow{G_2}$ are adjacent if and only if there is an inversion transforming $\overrightarrow{G_1}$ into $\overrightarrow{G_2}$.
The inversion diameter of a graph $G$ is the diameter of its inversion graph $\mathcal{I}(G)$, denoted by $\mathrm{diam}(\mathcal{I}(G))$.
Havet, H\"orsch, and Rambaud~(2024) first proved that for $G$ of treewidth $k$, $\mathrm{diam}(\mathcal{I}(G)) \le 2k$, and that there are graphs of treewidth $k$ with inversion diameter $k+2$.
In this paper, we construct graphs of treewidth $k$ with inversion diameter $2k$, which implies that the previous upper bound $\mathrm{diam}(\mathcal{I}(G)) \le 2k$ is tight.
Moreover, for graphs with maximum degree $\Delta$, Havet, H\"orsch, and Rambaud~(2024) proved $\mathrm{diam}(\mathcal{I}(G)) \le 2\Delta-1$ and conjectured that $\mathrm{diam}(\mathcal{I}(G)) \le \Delta$. We prove the conjecture when $\Delta=3$ with the help of computer calculations.
\end{abstract}

\section{Introduction}
An \emph{orientation} of an undirected graph is an assignment of a direction to each edge, turning the initial graph into a directed graph. Let $G$ be a simple graph and $\overrightarrow{G_1}$ an orientation of $G$. If $X$ is a vertex subset of $G$, the \emph{inversion} of $X$ on $\overrightarrow{G_1}$ is the operation that reverses the direction of all arcs with both end-vertices in $X$, and results in a new
orientation $\overrightarrow{G_2}$.

The concept of inversion was first introduced by~\cite{belkhechine2010inversion}. They studied the \emph{inversion number} of a directed graph $D$, denoted by $\inv(D)$, which is the minimum number of inversions that transform $D$ into an acyclic graph. They proved, for every fixed $k$, given a tournament $T$, determining whether $\inv(T) \le k$ is polynomial-time solvable.
In contrast,~\cite{bang2022inversion} proved that given any directed graph $D$, determining whether $\inv(D) \le 1$ is NP-complete. 

The maximum inversion number across all oriented graphs of order $n$, denoted by $\inv(n)$, has also been investigated.~\cite{aubian2025problems} and~\cite{alon2024invertibility} proved $n-2\sqrt{2\log n} \le \inv(n) \le n - \lceil \log (n+1) \rceil$. Besides these results, various related questions have also been studied.

Let $G$ be a simple graph. An inversion is a transformation between different orientations of $G$. Instead of transforming an orientation into an acyclic orientation, it is also natural to consider the inversion between two orientations. The \emph{inversion graph} of $G$ denoted by $\mathcal{I}(G)$, is the graph whose vertices are the orientations of $G$ in which two orientations $\overrightarrow{G_1}$ and $\overrightarrow{G_2}$ are adjacent if and only if there is an inversion $X$ transforming $\overrightarrow{G_1}$ into $\overrightarrow{G_2}$. The \emph{inversion diameter} of $G$ is the diameter of $\mathcal{I}(G)$, denoted by $\diam(\mathcal{I}(G))$. It represents the maximum number of inversions required to transform an orientation of $G$ into another.

\cite{havet2024diameter} first introduced inversion diameter and studied its behaviour on various classes of graphs.
Let $G$ be a graph and let $<$ be a total ordering on $V(G)$. For every pair $u, u'$ of vertices in $G$, let $N_{<u'}(u) = \{ v\in N(u) \mid v<u' \}$ and $N_{>u'}(u) = \{ v \in N(u) \mid v>u' \}$. We simply write $N_<(u)$ for $N_{<u}(u)$ and $N_>(u)$ for $N_{>u}(u)$.
The ordering $<$ is \emph{$t$-strong} if for every $u \in V(G)$
\begin{itemize}
    \item $|N_<(u)| + \log( |\{X \subseteq V(G) \mid \exists v \in N_>(u) , X \subseteq N_{<u}(v) \}| ) < t$, if $N_>(u) \neq \emptyset$, and
    \item $N_{<}(u) \le t $ otherwise.
\end{itemize}

A graph is \emph{strongly $t$-degenerate} if it admits a $t$-strong ordering of its vertices.
~\cite{havet2024diameter} showed that
\begin{theorem}[\cite{havet2024diameter}]\label{thm: strong t degenerate}
Let $G$ be a graph and let $t$ be a positive integer.
If $G$ is strongly $t$-degenerate, then $\diam(\mathcal{I}(G)) \le t$.
\end{theorem}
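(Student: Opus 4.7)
The plan is to reduce orientation transformations to an algebraic problem over $\mathbb{F}_2$ and solve it greedily along a $t$-strong ordering. Given two orientations $\overrightarrow{G_1}$ and $\overrightarrow{G_2}$ of $G$, let $f\colon E(G)\to\{0,1\}$ indicate whether the edge is oriented oppositely in the two. A sequence $X_1,\dots,X_t$ of inversions transforms $\overrightarrow{G_1}$ into $\overrightarrow{G_2}$ if and only if, for every edge $uv$, the number of indices $i$ with $\{u,v\}\subseteq X_i$ has parity $f(uv)$. Encoding the sequence by vectors $\chi(v)\in\mathbb{F}_2^t$ with $\chi(v)_i=1$ iff $v\in X_i$, this becomes
\[
\chi(u)\cdot\chi(v)\equiv f(uv)\pmod 2 \quad\text{for every edge } uv,
\]
so the task reduces to constructing such an assignment $\chi\colon V(G)\to\mathbb{F}_2^t$.

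I would build $\chi$ vertex by vertex in the $t$-strong order $v_1<\cdots<v_n$, maintaining the invariant that before processing a vertex $u$, the already assigned vectors $\{\chi(w):w\in N_<(u)\}$ are linearly independent in $\mathbb{F}_2^t$. Under this invariant the affine system $\chi(u)\cdot\chi(w)=f(uw)$ for $w\in N_<(u)$ is consistent, with solution set an affine subspace of size $2^{t-|N_<(u)|}$. When $N_>(u)=\emptyset$, the bound $|N_<(u)|\le t$ already yields a solution, and the invariant is preserved automatically because $u$ has no later neighbors and so does not appear in $N_<(w)$ for any future $w$.

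The heart of the argument is the case $N_>(u)\neq\emptyset$, where $\chi(u)$ must additionally avoid
\[
B_u := \bigcup_{v\in N_>(u)} \mathrm{span}\{\chi(w):w\in N_{<u}(v)\},
\]
so that for each $v\in N_>(u)$ the enlarged set $\{\chi(u)\}\cup\{\chi(w):w\in N_{<u}(v)\}$ remains independent and the invariant survives. The subset-to-sum map $X\mapsto\sum_{w\in X}\chi(w)$ surjects $\bigcup_{v\in N_>(u)} 2^{N_{<u}(v)}$ onto $B_u$, so $|B_u|\le\left|\{X\subseteq V(G)\mid \exists v\in N_>(u),\ X\subseteq N_{<u}(v)\}\right|$. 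The $t$-strong inequality then gives $\log|B_u|<t-|N_<(u)|$, so $B_u$ is strictly smaller than the affine solution space and some valid $\chi(u)$ exists outside it. The main obstacle, and the main design choice, is pinning down this invariant: it must be strong enough that the linear system is automatically solvable, yet weak enough that the inequality defining $t$-strongness is precisely what preserves it.
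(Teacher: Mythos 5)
Your proposal is correct and is essentially the argument behind this cited result: reduce to the vector-assignment formulation (Proposition~\ref{prop: diam vector}) and greedily assign vectors along the $t$-strong ordering, maintaining linear independence of the already-assigned vectors on each back-neighbourhood, with the union-of-spans count matching exactly the $\log$ term in the definition of a $t$-strong ordering. The paper itself only cites this theorem from Havet, H\"orsch and Rambaud rather than reproving it, and your construction coincides with their proof.
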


As corollaries of Theorem~\ref{thm: strong t degenerate}, they showed that various properties of a graph can be used to bound the diameter of its inversion graph.


\begin{theorem}[~\cite{havet2024diameter}]\label{thm: havet}

\par~
\begin{enumerate}
    \item For every graph $G$ with at least one edge and maximum degree $\Delta$, $\diam(\mathcal{I}(G)) \le 2\Delta - 1$.
    \item $\diam(\mathcal{I}(G)) \le 12$ for every planar graph $G$.
    \item
    $\diam(\mathcal{I}(G)) \le 2k$ for every graph $G$ of treewidth at most $k$.
\end{enumerate}
\end{theorem}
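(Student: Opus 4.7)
The unifying strategy is to apply Theorem~\ref{thm: strong t degenerate}: for each of the three bounds, I would exhibit a vertex ordering of $G$ and verify it is $t$-strong for the desired value of $t$. The work in each case is to design the ordering and then to control both $|N_<(u)|$ and the family $\{X \subseteq V(G) \mid \exists v \in N_>(u), X \subseteq N_{<u}(v)\}$.

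For the treewidth bound~(3), fix a tree decomposition $(T, \{B_x\}_{x \in V(T)})$ with $|B_x| \le k+1$, root $T$ arbitrarily, and for each $u$ let $B_u$ be the topmost bag containing $u$. Order the vertices by a DFS on $T$, breaking ties within a single bag arbitrarily. Then $N_<(u) \subseteq B_u \setminus \{u\}$, giving $|N_<(u)| \le k$. The central claim is that for every $v \in N_>(u)$, $N_{<u}(v) \subseteq B_u \setminus \{u\}$. This uses subtree-connectivity: since $u \sim v$ and $v > u$, the subtree $T_v$ of bags containing $v$ is rooted at a descendant of $B_u$, so for any $w \in N_{<u}(v)$ the unique $T$-path from $B_w$ to a bag shared by $w$ and $v$ must pass through $B_u$, forcing $w \in B_u$. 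Consequently the family $\{X\}$ lies in $2^{B_u \setminus \{u\}}$, so $\log|\{X\}| \le k$ and the ordering is $2k$-strong. The strict inequality in the $t$-strong definition is handled by a case check: if $|N_<(u)| = k$ and also $|\{X\}| = 2^k$, one would need $v \in N_>(u)$ adjacent to all of $B_u \setminus \{u\}$, which together with $u \sim v$ produces a $(k+2)$-clique on $B_u \cup \{v\}$, contradicting treewidth $k$.

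For the maximum-degree bound~(1), I would take any ordering; when $N_>(u) \neq \emptyset$ this yields $|N_<(u)| \le \Delta - 1$, and since $u \notin N_{<u}(v)$ also $|N_{<u}(v)| \le \Delta - 1$. The key is to argue that the family $\{X\}$ is contained in the power set of a universe of size at most $\Delta$: a natural idea is to choose the ordering so that the sets $N_{<u}(v)$ for $v \in N_>(u)$ sit inside a common ``$\Delta$-size shadow'' around $u$, whence $|\{X\}| \le 2^\Delta$ and the ordering is $(2\Delta - 1)$-strong. For the planar bound~(2), planar graphs are $5$-degenerate, so the minimum-degree elimination ordering gives $|N_<(u)| \le 5$; the family $\{X\}$ is then bounded using planar structure---e.g., that the common neighborhood of a pair $(u,v)$ across a planar embedding is tightly constrained---to get $\log|\{X\}| \le 7$ and hence a $12$-strong ordering.

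The main obstacle is controlling the family $\{X\}$ in each case. For treewidth this is clean because subtree-connectivity forces $\{X\}$ into a single power set $2^{B_u \setminus \{u\}}$. For maximum degree the naive union bound $|\{X\}| \le \Delta \cdot 2^{\Delta - 1}$ loses an additive $\log\Delta$ term, so one must design the ordering carefully or compress the union; this is the most delicate step. For planar graphs the core difficulty is a planar-specific structural lemma bounding the number of distinct $N_{<u}(v)$ that can simultaneously occur, which is where most of the combinatorial work sits. The treewidth proof is conceptually the cleanest, and I would present it first.
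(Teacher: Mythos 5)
The first thing to say is that the paper does not prove this statement: Theorem~\ref{thm: havet} is quoted from Havet, H\"orsch and Rambaud, and the surrounding text only records that they obtained it ``with the help of Theorem~\ref{thm: strong t degenerate}''. So your proposal has to stand on its own, and its high-level strategy---exhibit a $t$-strong ordering for each class---is indeed the route the paper attributes to the original authors.

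On the merits, however, there are concrete gaps. (i) In the treewidth case your handling of the strict inequality is wrong as stated: from $|N_<(u)|=k$ and $|\{X\}|=2^k$ you correctly deduce a vertex $v\in N_>(u)$ adjacent to every vertex of $B_u$, but $B_u\cup\{v\}$ need not be a $(k+2)$-clique, because the vertices of $B_u\setminus\{u\}$ need not be pairwise adjacent (the configuration ``$u,v$ both adjacent to $w_1,w_2$ with $u\sim v$ and $w_1\not\sim w_2$'' is exactly $K_4$ minus an edge, which has treewidth $2$). The contradiction is still there, but the correct reason is the Helly property of subtrees: the $k+2$ subtrees $T_v,T_u,T_{w_1},\dots,T_{w_k}$ pairwise intersect, hence share a bag of size $k+2$. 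Alternatively, and more in the spirit of this paper's Definition~\ref{def: k tree}, order the vertices by the construction order of a $k$-tree containing $G$; then for each $v\in N_>(u)$ the set $N_{<u}(v)$ lies inside the $k$-clique $C_v\ni u$ to which $v$ was attached, so $N_{<u}(v)\subseteq N_<(u)$ and $|N_{<u}(v)|\le k-1$, which forces $|\{X\}|\le 2^k-1$ whenever $|N_<(u)|=k$. (ii) For the maximum-degree bound, your key step (``choose the ordering so that the sets $N_{<u}(v)$ sit inside a common $\Delta$-size shadow'') is a hope, not an argument, and it is also unnecessary: for any ordering, writing $s=|N_>(u)|\ge 1$, one has $|N_<(u)|\le\Delta-s$ and $|\{X\}|\le 1+s(2^{\Delta-1}-1)\le s\cdot 2^{\Delta-1}$, so the quantity in the definition is at most $2\Delta-1-(s-\log s)\le 2\Delta-2<2\Delta-1$; the additive $\log s$ you worried about is absorbed by the decrease of $|N_<(u)|$. (iii) The planar case is not proved at all: ``bound $\log|\{X\}|\le 7$ using planar structure'' is a restatement of what must be shown, and that structural lemma is precisely the hard content of that part. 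So you have identified the right framework, but only the treewidth part is close to complete, and even there the final step needs to be repaired.
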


\cite{havet2024diameter} also proved that for given $k \ge 2$ and a graph $G$, determining whether $\diam(\mathcal{I}(G)) \le k$ is NP-hard.
For a graph $G$ with maximum degree $3$~(a sub-cubic graph),~\cite{havet2024diameter} showed a better bound $\diam(\mathcal{I}(G)) \le 4$. 
Moreover, they proposed the following conjecture on graphs with maximum degree $\Delta$.

\begin{conjecture}[\cite{havet2024diameter}]\label{conj: Delta}
For every graph $G$ with at least one edge and maximum degree $\Delta$, $\diam(\mathcal{I}(G)) \le \Delta$.
\end{conjecture}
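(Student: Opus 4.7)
The plan is to reformulate the conjecture algebraically and attack it by induction on $|V(G)|$. A sequence of inversions $X_1, \ldots, X_k$ transforms $\overrightarrow{G_1}$ into $\overrightarrow{G_2}$ exactly when the vectors $\chi(v) = (\mathds{1}[v \in X_i])_{i=1}^k \in \mathbb{F}_2^k$ satisfy $\chi(u) \cdot \chi(v) = b(uv)$ for every edge $uv$, where $b : E(G) \to \mathbb{F}_2$ records on which edges the two orientations disagree. Hence $diam(\mathcal{I}(G)) \le \Delta$ is equivalent to the following bilinear realisation statement, which is the object of the induction: for every graph $G$ with maximum degree $\Delta$ and every $b : E(G) \to \mathbb{F}_2$, there is a map $\chi : V(G) \to \mathbb{F}_2^\Delta$ with $\chi(u) \cdot \chi(v) = b(uv)$ for all $uv \in E(G)$.

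First I would try a vertex-deletion induction. Pick a vertex $u$ of degree $d \le \Delta$, invoke the induction hypothesis on $G - u$ to obtain a map $\chi'$, and extend it to $u$ by solving the linear system $\{\chi'(v) \cdot \chi(u) = b(uv) : v \in N(u)\}$ in $\mathbb{F}_2^\Delta$. When $d < \Delta$ the system has $d$ equations in $\Delta$ unknowns and enjoys genuine slack; a solution exists as soon as the rank of the matrix with rows $\chi'(v)$, $v \in N(u)$, matches the rank of the augmented system. To keep the induction going, I would strengthen the hypothesis and demand that the recursively produced $\chi$ additionally places each neighbourhood $\{\chi(v) : v \in N(w)\}$ in general position, i.e.\ linearly independent whenever $|N(w)| = \Delta$. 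Propagating this invariant through a low-degree deletion reduces to avoiding a controlled number of forbidden hyperplanes when choosing $\chi(u)$, which is a routine dimension count provided enough vertices of degree strictly less than $\Delta$ are available along the chosen elimination order.

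The main obstacle is the $\Delta$-regular case, where every extension step is a square system that must be invertible on the nose and the deletion induction has no slack to exploit. My strategy here would be to perform one preliminary inversion on a carefully chosen set $X_0$ before invoking the reduced problem with budget $\Delta - 1$, choosing $X_0$ so that after this inversion the residual flip pattern can be realised on the non-regular subgraph obtained by peeling off an appropriate vertex $u$. This shifts the burden onto a structural statement about $\Delta$-regular graphs: for every $b : E(G) \to \mathbb{F}_2$ there exists an $X_0 \subseteq V(G)$ and a vertex $u$ such that the residual pattern on $G - u$ lies in the range of the bilinear realisation map into $\mathbb{F}_2^{\Delta-1}$, with a solution extending consistently to $u$. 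I expect this to be the principal difficulty. For $\Delta = 3$ the paper of Havet, H\"orsch, and Rambaud~\cite{havet2024diameter} resolves the analogous step by computer enumeration over the small number of local configurations; for general $\Delta$, a combinatorial proof would likely require an edge-colouring or near-perfect-matching argument on $\Delta$-regular graphs (to produce the right $X_0$) combined with a finer result on bilinear forms over $\mathbb{F}_2$ controlling the rank of neighbourhood families under one-bit perturbations. It is this combination that I do not see at present, and which in my view is the reason the conjecture remains open for $\Delta \ge 4$.
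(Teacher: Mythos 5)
The statement you are addressing is a \emph{conjecture}: the paper does not prove it, and it remains open for $\Delta \ge 4$. The paper only establishes the case $\Delta = 3$ (Theorem~\ref{thm: Delta 3 diam}), so the only fair comparison is with that proof. Your algebraic reformulation is correct and is exactly Proposition~\ref{prop: diam vector}; but beyond that your text is a plan with two unfilled holes, both of which you acknowledge, and neither of which is ``routine.'' First, the vertex-deletion induction with the strengthened ``every full neighbourhood is linearly independent'' invariant is not a dimension count one can wave through: when you delete a vertex $u$ of degree $d$ and later reinsert it, you must choose $\chi(u)$ in the affine solution set of a $d\times\Delta$ system \emph{and} simultaneously preserve independence inside up to $d$ other neighbourhoods, each of which forbids an affine subspace of codimension that you cannot control; for $d=\Delta$ the solution set is a single point (or empty if the invariant ever failed one step earlier), so there is no room to dodge anything. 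This is precisely why the degeneracy-type argument of Havet et al.\ (Theorem~\ref{thm: strong t degenerate}) only delivers $2\Delta-1$. Second, your treatment of the regular case is circular as stated: splitting off one coordinate via a preliminary inversion $X_0$ converts the problem into realising a modified label in $\mathbb{F}_2^{\Delta-1}$ on \emph{all} of $G$, not on $G-u$; peeling off $u$ does not remove the $\Delta$ constraints at $u$, and a $(\Delta-1)$-dimensional budget on a graph of maximum degree $\Delta$ is in general impossible (the conjectured bound $\Delta$ is tight already for a single edge labelled $1$ when $\Delta=1$, and for cliques). Also, your attribution is off: Havet, H\"orsch and Rambaud proved only $diam(\mathcal{I}(G))\le 4$ for subcubic graphs; the $\Delta=3$ case of the conjecture is what \emph{this} paper proves.

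For contrast, the paper's $\Delta=3$ argument does not reduce the dimension at all. It takes a minimal counterexample (a ``$3$-critical'' graph with a bad label), keeps the full budget of $3$ coordinates throughout, and exploits the \emph{multiplicity} of valid extensions at low-degree boundary vertices: for an induced subgraph $H$, each boundary vertex $v$ carries a set $\mathcal{A}_f(v)$ of admissible vectors whose size is controlled by $d_{G-H}(v)$, and an independent set of boundary vertices may be re-assigned simultaneously (Lemma~\ref{lemma: available boundary sets}). This yields a sequence of forbidden ``reducible'' configurations (degree $\le 2$ vertices, $K_4^-$, triangles, $P_3$ with two $1$-edges, $K_{2,3}$, $C_4$ with a $1$-edge), with the finite case analyses delegated to a computer, until no graph survives. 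If you want to push toward $\Delta\ge 4$, the lesson from the paper is that the slack lives in the boundary sets $\mathcal{B}(v)$ of a local deletion, not in a dimension reduction; the obstacle is that the number of local configurations to discharge grows quickly with $\Delta$.
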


The conjecture is true for $\Delta \le 2$~\cite{havet2024diameter}.
In this paper, we prove the conjecture when $\Delta = 3$.
Computer assistance will be used in the proof of Theorem~\ref{thm: Delta 3 diam}.
A pure mathematical proof is still worth studying.

\begin{theorem}\label{thm: Delta 3 diam}
If $G$ is a graph of maximum degree $3$, then $\diam(\mathcal{I}(G)) \le 3$.
\end{theorem}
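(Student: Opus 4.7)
The plan is to recast the problem as an $\mathbb{F}_2^3$-labeling problem and then prove existence by induction on $|V(G)|$, with computer assistance handling the base cases and extension step for small irreducible configurations.

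Given orientations $\overrightarrow{G_1}, \overrightarrow{G_2}$ of $G$, let $d: E(G) \to \mathbb{F}_2$ indicate the edges where they differ. A triple of subsets $X_1, X_2, X_3 \subseteq V(G)$ produces three successive inversions transforming $\overrightarrow{G_1}$ into $\overrightarrow{G_2}$ if and only if the labeling $\varphi(v) := (\mathbb{1}[v \in X_i])_{i=1,2,3} \in \mathbb{F}_2^3$ satisfies $\langle \varphi(u), \varphi(w) \rangle = d(uw)$ for every edge $uw \in E(G)$, where $\langle \cdot, \cdot \rangle$ denotes the standard bilinear form on $\mathbb{F}_2^3$: the number of inversions applied to edge $uw$ is $|\{i : u,w \in X_i\}| = \langle \varphi(u), \varphi(w) \rangle$ taken modulo $2$. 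Therefore Theorem~\ref{thm: Delta 3 diam} follows once we establish the purely combinatorial claim that for every sub-cubic graph $G$ and every $d : E(G) \to \mathbb{F}_2$, such a labeling $\varphi$ exists.

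Proceeding by induction on $|V(G)|$ with $G$ connected, I would first dispatch low-degree reductions. A vertex $v$ of degree at most one is removed, induction applied to $G - v$, and $\varphi$ extended to $v$ by solving at most one linear equation over $\mathbb{F}_2^3$, which always has at least four solutions. A vertex of degree two is handled similarly, but extending now requires satisfying two equations; this can be arranged because the inductive hypothesis provides a large family of valid labelings and we have flexibility in choosing the labels on the neighbors of $v$. Cut vertices and bridges allow us to decompose $G$ into blocks and handle them separately.

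The core case is when $G$ is $2$-connected and cubic. Here I would identify a finite list $\mathcal{L}$ of small reducible subgraphs $S$ (for instance: an edge between two degree-$3$ vertices, a triangle, a $4$-cycle, or a short induced path) such that (i) every $2$-connected cubic graph contains some $S \in \mathcal{L}$, and (ii) for each possible boundary labeling of $G - S$ on the vertices adjacent to $S$ and each local value of $d$, the bilinear constraints inside $S$ and between $S$ and its boundary are jointly satisfiable. Condition (ii) is a finite combinatorial check (bounded by the size of $S$), which I would delegate to the computer. The main obstacle is that the induction only guarantees \emph{some} valid labeling of $G - S$, not one that is \emph{a priori} compatible with extending to $S$; to handle this, each reducible $S$ must be accompanied by a bounded local rerouting allowing me to modify the inductive solution in a neighborhood of the boundary of $S$ before extending. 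Verifying that such a rerouting always exists, across every shape of $S$ in $\mathcal{L}$, every boundary configuration, and every local $d$, is the heart of the computer verification and is what makes the $\Delta=3$ case of Conjecture~\ref{conj: Delta} tractable.
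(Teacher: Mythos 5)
Your proposal follows essentially the same route as the paper: reduce to the existence of an $\mathbb{F}_2^3$ vector assignment realizing the prescribed edge labels, take a minimal counterexample, eliminate vertices of degree at most $2$, and then show that certain small configurations (triangles, labeled $4$-cycles, $K_{2,3}$'s, and finally an edge labeled $1$) are ``reducible'' via a computer check, where your ``bounded local rerouting'' is precisely the paper's mechanism of an available boundary family whose modifiable boundary vertices form an independent set. The only caveat is that the reducible configurations must be chosen label-dependently and eliminated in a careful order (e.g.\ an edge labeled $1$ is only reducible once triangles and labeled $C_4$'s are excluded so that its boundary is independent), and the block decomposition you invoke is neither needed nor obviously valid, since a cut vertex's label couples the blocks.
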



For graphs with treewidth at most $k$,~\cite{havet2024diameter} showed that there are graphs of treewidth at most $k$ with inversion diameter $k+2$.
In this paper, we show that the upper bound $\diam(\mathcal{I}(G)) \le 2k$ for graphs of treewidth at most $k$ is tight by proving Theorem~\ref{thm: main}. In doing so, we answer a question proposed by ~\cite{havet2024diameter}.

\begin{theorem}\label{thm: main}
    For every positive integer $k$, there are graphs of treewidth $k$ with inversion diameter $2k$.
\end{theorem}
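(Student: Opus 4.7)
The upper bound $diam(\mathcal{I}(G_k)) \le 2k$ is immediate from Theorem~\ref{thm: havet}(3), so my task is to exhibit, for each positive integer $k$, a graph $G_k$ of treewidth exactly $k$ together with two orientations of $G_k$ at inversion distance at least $2k$.

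My central tool is the following $\mathbb{F}_2$-realization reformulation of the inversion distance. If $\overrightarrow{G}_1, \overrightarrow{G}_2$ are orientations of $G$ differing exactly on edge set $D \sb E(G)$, then $d_{\mathcal{I}(G)}(\overrightarrow{G}_1, \overrightarrow{G}_2) \le t$ if and only if there exist vectors $\ell(v) \in \mathbb{F}_2^t$ for $v \in V(G)$ satisfying $\ell(u) \cdot \ell(v) \equiv \mathbf{1}[uv \in D] \pmod{2}$ for every edge $uv \in E(G)$. Indeed, given a sequence of inversions $X_1, \ldots, X_t$, set $\ell(v)_i = \mathbf{1}[v \in X_i]$; the net parity of reversals of edge $uv$ is then exactly $\ell(u) \cdot \ell(v)$. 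So to lower-bound the diameter by $2k$ it suffices to design $G_k$ and choose $D \sb E(G_k)$ for which no labeling into $\mathbb{F}_2^{2k-1}$ realizes the prescribed inner-product profile.

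For the construction I would take $G_k$ as an explicit $k$-tree, built from a base clique $K_{k+1}$ by iteratively attaching new vertices, each adjacent to some carefully chosen $k$-clique of the current graph. Every $k$-tree admits a tree decomposition into bags of size $k+1$, certifying $tw(G_k) \le k$, and the presence of $K_{k+1}$ forces $tw(G_k) \ge k$. The attachment pattern must be tuned so that, for a natural choice such as $D = E(G_k)$ (so the target is the reverse orientation), the constraints $\ell(u) \cdot \ell(v) = 1$ force the labels of a distinguished set of $2k$ vertices to be linearly independent in $\mathbb{F}_2^t$, compelling $t \ge 2k$. A natural first guess is to include, for each $k$-clique of the base $K_{k+1}$ and each subset thereof, a ``witness'' satellite vertex mimicking the set family of size up to $2^k$ appearing in the upper-bound argument via Theorem~\ref{thm: strong t degenerate}.

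The principal obstacle is the lower bound on the realization dimension. The upper bound is tight because each vertex's label may spend $\le k$ coordinates encoding the identities of its earlier neighbors and $\le k$ more coordinates encoding a pointer into a $\le 2^k$-sized family of later configurations; a matching construction must certify that these two contributions cannot share any coordinates. I anticipate proving this by induction on the construction order of $G_k$: at each stage, the newly attached witnesses, together with the constraints already accumulated, force linear independence of one additional label direction, culminating in $2k$ linearly independent labels among the designated vertices. The delicate point is to calibrate the witness structure so that this induction genuinely goes through \emph{without} inflating the treewidth beyond $k$—any auxiliary clique introduced must already be covered by an existing bag of size $k+1$.
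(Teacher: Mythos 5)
Your framework is right---the reduction to $\mathbb{F}_2$-realizability (Proposition~\ref{prop: diam vector} in the paper), the plan to build a $k$-tree by iteratively attaching satellite vertices to $k$-cliques, and the observation that the witnesses should mimic the $2^k$-sized family from the upper-bound argument. But the actual lower-bound argument, which is the entire content of the theorem, is missing, and the specific plan you sketch for it would not go through. Your proposed induction is to show that each newly attached witness ``forces linear independence of one additional label direction,'' culminating in $2k$ independent labels in $\mathbb{F}_2^{2k-1}$. The obstruction is exactly the degenerate case the paper isolates in its Lemma~\ref{lemma: independent or sum}: if $v_1,\dots,v_k$ is a $k$-clique and $u$ is attached to it, the constraints only force that \emph{either} $\mathbf{v_1},\dots,\mathbf{v_k},\mathbf{u}$ are independent \emph{or} $\mathbf{u}=\sum_i \mathbf{v_i}$. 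No choice of edge labels on the edges $uv_i$ alone can exclude the second alternative, so a step-by-step independence induction stalls immediately. The paper's proof instead tracks ``bad cliques'' (cliques $C$ with $\dim(\mathbf{V}_C\cap\mathbf{V}_C^{\perp})\ge |C|-1$), grows a bad clique to size $k$ via a delicate parity/orthogonality analysis (Lemma~\ref{lemma: 0 sol independence}), and derives the final contradiction from Lemma~\ref{lemma: 3}, not from exhibiting $2k$ independent vectors.

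A second concrete problem is your choice of label. Taking $D=E(G_k)$ (every edge reversed, i.e.\ every edge labeled $1$) is too poor: the paper's argument repeatedly needs that for \emph{every} $k$-clique already present and \emph{every} target vector $\mathbf{b}\in\mathbb{F}_2^k$ there is an attached witness $u$ with $\pi(uv_j)=b_j$ for all $j$ (this is what makes Lemmas~\ref{lemma: sol independence or sum}--\ref{lemma: 3} and the choices of $v_0$, the $w_j$'s, $x$, and $x^*$ in Lemma~\ref{lemma: 0 sol independence} possible). The graph and the label must therefore be constructed together, with $2^k$ witnesses per clique carrying all possible label patterns, iterated enough times (the paper uses roughly $k(k+2)$ levels). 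As written, your proposal establishes the easy half (treewidth exactly $k$, upper bound $2k$) and correctly frames the problem, but the lower bound remains a conjecture within it.
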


This paper is organized as follows.
In Section~\ref{sec: preliminary}, we give basic definitions and notation. The proofs of Theorems~\ref{thm: main} and~\ref{thm: Delta 3 diam} are given
in Sections~\ref{sec: main} and~\ref{sec: delta 3}, respectively.

\section{Preliminaries}\label{sec: preliminary}

Let $G=(V,E)$ be a graph. The \emph{distance} between $u$ and $v$, denoted by $\dist(u,v)$, is the number of edges in a shortest path joining $u$ and $v$. For any vertex $u\in V(G)$, let $N(u)=\{v \mid uv\in E(G)\}$ and denote by $d(u)=|N(u)|$ the degree of $u$. Let $\Delta(G)$  be the maximum degree of $G$. We call $G$ \emph{$k$-regular} if $d(u)=k$ for every $u\in V(G)$. Let $G$ be a graph and $S$ a vertex subset of its vertices. 
Let $G[S]$ denote the subgraph of $G$ induced by $S$.
For a graph $G$ and a vertex $v$, denote by $G-v$  the graph induced by $V(G) - \{v\}$.
For a graph $G$ and an induced subgraph $H$, denote by $G-H$  the graph induced by $V(G) - V(H)$.

A \emph{labelling} of $G$ is a mapping $\pi: E(G) \rightarrow \mathbb{F}_2$.
A \emph{$t$-dim vector assignment} of $G$ respecting the labelling $\pi$ is a mapping $f: V(G) \rightarrow \mathbb{F}_2^t$ such that $\pi(uv) = f(u) \cdot f(v)$ for every edge $ uv \in E(G)$, where
\(f(u) \cdot f(v)\) is the scalar product of \(f(u)\) and \(f(v)\) over \(\mathbb{F}_2^t\).
Usually, we use the bold letter $\mathbf{u}$ to represent $f(u)$.
We use $\mathbf{0}$~(resp. $\mathbf{1}$) to represent vectors in $\mathbb{F}_2^t$ whose coordinates are all $0$~(resp. $1$).
We say a vector $\mathbf{u} \in \mathbb{F}_2^t$ is odd~(resp.~even), if $\mathbf{u \cdot 1} $ is one~(resp.~zero), i.e., $\mathbf{u}$ has an odd~(resp.~even) number of $1$s.

The inversion diameter has a close relation with vector assignment as given in the following proposition.

\begin{proposition}[\cite{havet2024diameter}]\label{prop: diam vector}
For every graph $G$ and every positive integer $t$, the following are equivalent.
\begin{enumerate}
    \item $\diam(\mathcal{I}(G)) \le t$.
    \item For every labelling $\pi$, there exists a $t$-dim vector assignment of $G$ respecting the labelling $\pi$.
\end{enumerate}
\end{proposition}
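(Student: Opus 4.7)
The plan is to set up a bijective correspondence between sequences of $t$ inversions and $t$-dim vector assignments, and then read off both implications from it.

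First I would identify each orientation of $G$ with an element of $\mathbb{F}_2^{E(G)}$ by taking its symmetric difference with a fixed reference orientation. Under this identification, applying the inversion of a set $X \subseteq V(G)$ amounts to adding the label $\chi_X(uv) = \mathds{1}[u \in X]\cdot\mathds{1}[v \in X]$. Hence for any sequence $X_1,\ldots,X_t$, the edges reversed by performing all of them (in any order) are precisely those $uv$ with $\sum_{i=1}^{t} \mathds{1}[u\in X_i]\,\mathds{1}[v\in X_i] \equiv 1 \pmod 2$. The key observation is that this is exactly the scalar product $f(u)\cdot f(v)$ over $\mathbb{F}_2^t$, where $f(v) := (\mathds{1}[v\in X_1],\ldots,\mathds{1}[v\in X_t])$. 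The map $f \leftrightarrow (X_1,\ldots,X_t)$ is a bijection, and $f$ is a $t$-dim vector assignment with label $\pi$ if and only if the sequence $X_1,\ldots,X_t$ realizes exactly the edge flips prescribed by $\pi$.

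With this correspondence in place, both implications are essentially immediate. For $(1) \Rightarrow (2)$, given any $\pi$, I would fix an orientation $\overrightarrow{G_1}$, flip its edges on which $\pi = 1$ to obtain $\overrightarrow{G_2}$, and invoke $diam(\mathcal{I}(G)) \le t$ to obtain a sequence of at most $t$ inversions sending $\overrightarrow{G_1}$ to $\overrightarrow{G_2}$; padding the sequence to length exactly $t$ with empty sets $X_i = \emptyset$ (which flip no edge) yields the desired $f$. For $(2) \Rightarrow (1)$, given any two orientations, I would let $\pi$ record the edges on which they disagree, take a $t$-dim vector assignment $f$ with label $\pi$ guaranteed by hypothesis, and set $X_i := \{v : f(v)_i = 1\}$; these $t$ inversions then transform one orientation into the other.

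The only potential obstacle is the mod-$2$ bookkeeping, namely confirming that $f(u)\cdot f(v)$ correctly encodes the parity of the number of inversion sets containing both endpoints, and that padding with empty inversions is harmless. Neither step is delicate, so the proposition should follow directly from the correspondence.
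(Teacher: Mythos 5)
Your proof is correct, and it is exactly the standard correspondence used in the cited source (the paper itself states this proposition without proof, attributing it to Havet, H\"orsch, and Rambaud): identify a sequence of $t$ inversions $X_1,\dots,X_t$ with the assignment $f(v)=(\mathds{1}[v\in X_1],\dots,\mathds{1}[v\in X_t])$, so that the net reversal of an edge $uv$ is $f(u)\cdot f(v)$. The padding with empty inversions and the mod-$2$ bookkeeping are handled correctly, so there is nothing to add.
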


The treewidth of a graph $G$, denoted by $\tw(G)$ can be defined in many ways.
Here we give a definition of treewidth from the perspective of $k$-trees.

\begin{definition}\label{def: k tree}
A graph $G$ is a \emph{$k$-tree} if
\begin{enumerate}
    \item it is a $k$-clique, or
    \item there exists a vertex $v$ such that $N(v)$ is a $k$-clique, and $G-v$ is a k-tree.
\end{enumerate}
\end{definition}

We say a graph is a \emph{partial $k$-tree} if it is a subgraph of a $k$-tree.
It is known that a graph $G$ is a partial $k$-tree if and only if the treewidth of $G$ is at most $k$~\cite{scheffler1989baumweite, van1990graph}.

Let $\mathcal{L}(\mathbf{v}_1, \ldots, \mathbf{v}_k)$ denote the linear space spanned by $\mathbf{v}_1, \ldots, \mathbf{v}_k$.
For two vectors $\mathbf{v}$ and $\mathbf{u}$ in $\mathbb{F}_2^t$, we write $\mathbf{v \perp u}$ if $\mathbf{v \cdot u} = 0$.
For a vector $\mathbf{v}\in \mathbb{F}_2^t$ and a linear space $\mathbf{U}$ in $\mathbb{F}_2^t$, we write $\mathbf{v \perp U}$ if $\mathbf{v \perp u}$ for every $ \mathbf{u \in U}$. The orthogonal complementary space of $\mathbf{U}$ is $\mathbf{U}^{\perp} = \{ \mathbf{v} \mid \mathbf{v \perp U}\}$. For any positive integer $k$, we write $[k] = \{1,2,\ldots, k\}$.

\begin{definition}
    We say the vectors $\mathbf{v}_1, \ldots, \mathbf{v}_k$ are orthogonal if $\mathbf{v}_i \perp \mathbf{v}_j $ for all $i, j \in [k]$ with $i \neq j$.
    We say they are self-orthogonal if $\mathbf{v}_i \perp \mathbf{v}_j$ for all $i, j \in [k]$, that is, they are orthogonal and every vector is even.
\end{definition}

\begin{definition}
    A linear space $\mathbf{U}$ is self-orthogonal if $\mathbf{U \subseteq U^{\perp}}$.
\end{definition}

Let $\mathbf{U}$ be a self-orthogonal linear space. Then $\mathbf{U}$ is orthogonal and every vector in $\mathbf{U}$ is even.
It is easy to verify that $\mathbf{U}$ is self-orthogonal if and only if it has self-orthogonal base vectors.

For a linear space $\mathbf{U}$ and a vector $\mathbf{v}$, denote by $\mathbf{v + U}$ the set $\{ \mathbf{v+u} \mid \mathbf{u \in U}\}$ and denote by $\mathcal{L}(\mathbf{U,v})$ the space spanned by $\mathbf{v}$ and a basis of $\mathbf{U}$, that is the summation space of $\mathbf{U}$ and $\mathcal{L}(\mathbf{v})$.


\section{Proof of Theorem~\ref{thm: main}}\label{sec: main}

For  $k \ge 1$, we define a sequence of graphs $G_i^{(k)}$ respecting a fixed labelling $\pi_i^{(k)}$.
First, let $G_0^{(k)}$ be a $k$-clique respecting an arbitrary labelling $\pi_0^{(k)}$.
For convenience, we define $V(G^{(k)}_{-1}) = \emptyset$.
Then, we recursively construct $G_i^{(k)}$ as follows:

(i) for each $k$-clique with vertices $v_1,\ldots,v_k$ in $G_{i-1}^{(k)}$ and each $\mathbf{x} ={(x_1,\ldots,x_k)}^T\in \mathbb{F}_2^{k}$, we add  a new vertex $u$ such that $uv_j\in E(G_i^{(k)})$ and  $\pi_i^{(k)}(u v_j) = x_j$, for all $1\le j\le k$;

(ii) $\pi^{(k)}_{i} |_{G^{(k)}_{i-1}} = \pi^{(k)}_{i-1}$.

Since $|\mathbb{F}_2^{k}|=2^k$, we add $2^k$ new vertices for each $k$-clique in $G_{i-1}^{(k)}$. 
Observe that every $p$-clique~($p < k$) in $G_{i}^{(k)}$ must be contained in a $k$-clique in $G_{i}^{(k)}$.
By Definition~\ref{def: k tree}, $G_m^{(k)}$ is a $k$-tree for every $m$, that is, of treewidth at most $k$.
Since $\pi^{(k)}_{n} |_{G^{(k)}_{m}} = \pi^{(k)}_{m}$ when $n > m$, we may use $\pi^{(k)}$ to denote the labelling of $G^{(k)}_{m}$ for every $m$. For every vertex $v \in V(G^{(k)}_{m})$ with $m\ge 1$, there exists a unique $n$ such that $v \in V(G^{(k)}_n) - V(G^{(k)}_{n-1})$.
We say $n$ is the \emph{level} of $v$, denoted by $l(v) = n$.
For a vertex set $S \subseteq G^{(k)}_{m}$ with $m\ge 1$, the level of $S$ is defined to be the maximum level of a vertex in $S$ and it is denoted by $l(S)$, that is, $l(S) = \max_{v \in S} \{l(v)\}$.
Clearly, if $v$ is a vertex in $G_{m}^{(k)}$, then $l(v) \le m$.
Similarly, if $C$ is a vertex set in $G_{m}^{(k)}$, then $l(C) \le m$.


Note that if $H$ is a subgraph of $G$, then $\diam(\mathcal{I}(H)) \le \diam(\mathcal{I}(G))$.
So ${(\diam(\mathcal{I}(G_m^{(k)})))}_{m \ge 0}$ is an increasing sequence with upper bound $2k$ by Theorem~\ref{thm: havet}.

Let $\lambda^{(k)} = \lim_{m \rightarrow +\infty} \diam(\mathcal{I}(G_m^{(k)}))$.
Then $\lambda^{(k)} \le 2k$.
We will show that $\lambda^{(k)} = 2k$, that is, $G_m^{(k)}$ is of inversion diameter $2k$ when $m$ is sufficiently large.

Next we suppose that $\lambda^{(k)} \le 2k-1$. Then for every $m$, $G_m^{(k)}$  has a $(2k-1)$-dim vector assignment respecting the labelling $\pi^{(k)}$ by Proposition~\ref{prop: diam vector}. Thus for each $v\in V(G_m^{(k)})$, there is a vector $\mathbf{v}\in \mathbb{F}_2^{2k-1}$ corresponding to it.
The following lemmas show the properties of the vectors assigned to $k$-cliques in $G^{(k)}_{m}$.

\begin{lemma}\label{lemma: clique independence}
    If there is a $k$-clique of level $m$ with vertices $v_1, \ldots, v_k$ in $G^{(k)}_{m+1}$, then $\mathbf{v}_1, \ldots, \mathbf{v}_k$ are linearly independent.
\end{lemma}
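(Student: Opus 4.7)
The plan is to argue by contradiction and exploit the surjectivity built into the construction of $G^{(k)}_{m+1}$. Suppose $\mathbf{v_1}, \ldots, \mathbf{v_k}$ are linearly dependent, so there exist $c_1, \ldots, c_k \in \mathbb{F}_2$, not all zero, with $\sum_{j=1}^k c_j \mathbf{v_j} = \mathbf{0}$. Since the clique $\{v_1, \ldots, v_k\}$ has level $m$, every $v_j$ lies in $V(G_m^{(k)})$, so by step (i) of the recursive construction, for every choice $\mathbf{x} = (x_1, \ldots, x_k) \in \mathbb{F}_2^k$ there is a vertex $u \in V(G^{(k)}_{m+1})$ adjacent to all the $v_j$ with $\pi^{(k)}(u v_j) = x_j$ for each $j \in [k]$.

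Now use the hypothesis $\lambda^{(k)} \le 2k-1$ together with Proposition~\ref{prop: diam vector}: $G^{(k)}_{m+1}$ admits a $(2k-1)$-dim vector assignment with label $\pi^{(k)}$, and the vector $\mathbf{u}$ assigned to the new vertex $u$ satisfies $\mathbf{u} \cdot \mathbf{v_j} = x_j$ for every $j$. Choosing $\mathbf{x}$ so that $\sum_j c_j x_j = 1$ (possible because some $c_j$ equals $1$) and taking the dot product of $\mathbf{u}$ with $\sum_j c_j \mathbf{v_j} = \mathbf{0}$ gives $0 = \sum_j c_j x_j = 1$, a contradiction.

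There is no real obstacle here: the lemma is essentially the observation that the $2^k$ ``children'' of the clique $\{v_1,\ldots,v_k\}$ in $G^{(k)}_{m+1}$ realize every one of the $2^k$ possible $\mathbb{F}_2$-linear functionals on $\mathcal{L}(\mathbf{v_1},\ldots,\mathbf{v_k})$, which forces $\dim \mathcal{L}(\mathbf{v_1},\ldots,\mathbf{v_k}) = k$. The only thing to be careful about is invoking the right level: because $l(\{v_1,\ldots,v_k\})=m$, the clique is already present in $G^{(k)}_m$, so all $2^k$ children are genuinely present in $G^{(k)}_{m+1}$ and the vector assignment must accommodate them.
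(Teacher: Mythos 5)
Your proof is correct and is essentially the paper's own argument: both exploit the fact that the construction attaches, to the level-$m$ clique, children realizing every functional $\mathbf{x}\in\mathbb{F}_2^k$ on $\mathbf{v_1},\ldots,\mathbf{v_k}$, and then derive $0=1$ from a nontrivial dependence. The only cosmetic difference is that the paper fixes one explicit choice of $\mathbf{x}$ while you choose any $\mathbf{x}$ with $\sum_j c_j x_j=1$.
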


\begin{proof}
    Otherwise, without loss of generality, assume $\mathbf{v}_1 = \sum_{i=2}^{k} c_i \mathbf{v}_i$ where $c_i \in \mathbb{F}_2$ for all $ 2 \le i \le k$. By the construction,
    there exists a vertex $u \in V(G^{(k)}_{m+1})$ which is connected to $v_2,\ldots, v_k$ with edges labelled by $1$, and to $v_1$ with an edge labelled by $\sum_{i=2}^{k}c_i + 1$.
    Therefore,
    \begin{equation*}
    \begin{aligned}
       \sum_{i=2}^{k}c_i + 1   & =  \pi^{(k)}(uv_1) = \mathbf{u} \cdot \mathbf{v}_1 = \mathbf{u} \cdot \sum_{i=2}^k c_i \mathbf{v}_i = \sum_{i=2}^k c_i \mathbf{u} \cdot \mathbf{v}_i  = \sum_{i=2}^k c_i\pi^{(k)}(uv_i) = \sum_{i=2}^k c_i,
    \end{aligned}
    \end{equation*}
    a contradiction.
\end{proof}

\begin{lemma}\label{lemma: independent or sum}
    If there is a $k$-clique  in $G^{(k)}_{m+2}$ of level $m$ with vertices $v_1, \ldots, v_k$, and $u$ is a vertex of level $m+1$ adjacent to all ${(v_i)}_{1 \le  i \le k}$,
    then either $\mathbf{v}_1, \ldots, \mathbf{v}_k, \mathbf{u}$ are linearly independent, or $\mathbf{u} = \sum_{i=1}^{k}\mathbf{v}_i$.
\end{lemma}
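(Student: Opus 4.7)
The plan is to reduce the statement to a collection of applications of Lemma~\ref{lemma: clique independence} to suitable $k$-cliques of level $m+1$.

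First I would note that by Lemma~\ref{lemma: clique independence} applied to the level-$m$ clique $\{v_1,\ldots,v_k\}$ inside $G^{(k)}_{m+1}$ (which is a subgraph of $G^{(k)}_{m+2}$), the vectors $\mathbf{v_1},\ldots,\mathbf{v_k}$ are linearly independent. If in addition $\mathbf{v_1},\ldots,\mathbf{v_k},\mathbf{u}$ are independent then the first alternative holds and we are done. Otherwise, since $\mathbf{v_1},\ldots,\mathbf{v_k}$ already span a $k$-dimensional space, the vector $\mathbf{u}$ must lie in $\mathcal{L}(\mathbf{v_1},\ldots,\mathbf{v_k})$, so there is a unique subset $S\subseteq[k]$ with $\mathbf{u}=\sum_{i\in S}\mathbf{v_i}$. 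The remaining task is to show $S=[k]$.

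For this, I would exploit the new $k$-cliques created at the next level. For every $(k-1)$-subset $T\subset[k]$, the set $\{v_i : i\in T\}$ is a $(k-1)$-clique inside the $k$-clique $\{v_1,\ldots,v_k\}$, and $u$ is adjacent to each $v_i$ by hypothesis, so $C_T:=\{u\}\cup\{v_i : i\in T\}$ is a $k$-clique in $G^{(k)}_{m+1}$. Since $l(u)=m+1$ and $l(v_i)\le m$ for all $i$, we have $l(C_T)=m+1$. Therefore $C_T$ is a $k$-clique of level $m+1$ in $G^{(k)}_{(m+1)+1}=G^{(k)}_{m+2}$, and Lemma~\ref{lemma: clique independence} gives that $\mathbf{u}$ together with $\{\mathbf{v_i} : i\in T\}$ are linearly independent.

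In particular $\mathbf{u}\notin\mathcal{L}(\mathbf{v_i} : i\in T)$, and using the independence of $\mathbf{v_1},\ldots,\mathbf{v_k}$ together with the representation $\mathbf{u}=\sum_{i\in S}\mathbf{v_i}$, this is equivalent to $S\not\subseteq T$, i.e.\ $S$ must contain the unique element of $[k]\setminus T$. Letting $T$ range over all $(k-1)$-subsets of $[k]$ forces $S=[k]$, and hence $\mathbf{u}=\sum_{i=1}^{k}\mathbf{v_i}$, finishing the proof. There is no serious obstacle: the only subtlety is verifying that each $C_T$ really has level $m+1$ and lies inside $G^{(k)}_{m+2}$, so that Lemma~\ref{lemma: clique independence} legitimately applies.
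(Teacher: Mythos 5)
Your proposal is correct and follows essentially the same route as the paper: the paper likewise observes that for each $j$ the set $\{u\}\cup\{v_i : i\neq j\}$ is a $k$-clique of level $m+1$ in $G^{(k)}_{m+2}$, applies Lemma~\ref{lemma: clique independence} to each, and concludes that every coefficient in the expansion of $\mathbf{u}$ must be $1$. Your $(k-1)$-subsets $T$ are exactly the paper's complements $[k]\setminus\{j\}$, so the two arguments coincide.
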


\begin{proof}
    Firstly, by Lemma~\ref{lemma: clique independence}, $\mathbf{v}_1, \ldots, \mathbf{v}_k$ are linearly independent.
    Note that for every $1 \le j \le k$, $v_1, \ldots, v_{j-1}, v_{j+1}, \ldots, v_k, u$ is also a $k$-clique of level $m+1$ in $G^{(k)}_{m+2}$.
    Then by Lemma~\ref{lemma: clique independence}, for every $1 \le j \le k$, $\mathbf{v}_1, \ldots, \mathbf{v}_{j-1}, \mathbf{v}_{j+1}, \ldots, \mathbf{v}_k, \mathbf{u}$ are linearly independent. \
    Assume $\mathbf{u} = \sum_{i=1}^{k} c_i \mathbf{v}_i$ where $c_i \in \mathbb{F}_2$ for all $1 \le i \le k$. If $c_j = 0$ for some $j$, then it contradicts that $\mathbf{v}_1, \ldots, \mathbf{v}_{j-1}, \mathbf{v}_{j+1}, \ldots, \mathbf{v}_k, \mathbf{u}$ are linearly independent. Therefore, $\mathbf{u} = \sum_{i=1}^{k} \mathbf{v}_i$.
\end{proof}

\begin{lemma}\label{lemma: sol independence or sum}
   Let $v_1, \ldots, v_k$ be vertices of
     a $k$-clique of level $m$ in $G^{(k)}_{m+2}$ and  $\mathbf{A} = {(\mathbf{v}_1, \ldots, \mathbf{v}_k)}^T$. Then for every $\mathbf{b} \in \mathbb{F}_2^k$, $\mathbf{Ax=b}$ has a solution $\mathbf{y}$ such that either $\mathbf{v}_1, \ldots, \mathbf{v}_k, \mathbf{y}$ are linearly independent, or $\mathbf{y} = \sum_{i=1}^{k} \mathbf{v}_i$.
\end{lemma}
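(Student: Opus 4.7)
The plan is to exhibit $\mathbf{y}$ explicitly using the recursive construction of $G^{(k)}_{m+1}$ and then invoke Lemma~\ref{lemma: independent or sum} to verify the dichotomy. Since $v_1,\ldots,v_k$ form a $k$-clique of level $m$, this clique already exists in $G^{(k)}_m$. By construction step (i), for the given target vector $\mathbf{b}=(b_1,\ldots,b_k)^T \in \mathbb{F}_2^k$, there is a vertex $u \in V(G^{(k)}_{m+1})\setminus V(G^{(k)}_m)$ that is adjacent to every $v_j$ with $\pi^{(k)}(uv_j)=b_j$. In particular $u$ has level $m+1$, and it is also a vertex of $G^{(k)}_{m+2}$.

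Next I would translate the labels into scalar products. The defining condition of a vector assignment gives $\mathbf{u}\cdot \mathbf{v_j}=\pi^{(k)}(uv_j)=b_j$ for every $j\in[k]$, which is exactly the system $\mathbf{Au}=\mathbf{b}$ (remember that $\mathbf{A}$ has the $\mathbf{v_j}$ as its rows). Hence $\mathbf{y}:=\mathbf{u}$ is a solution of $\mathbf{Ax}=\mathbf{b}$.

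Finally, the vertex $u$ has level $m+1$ and is adjacent to all the $v_j$'s, so Lemma~\ref{lemma: independent or sum} applies directly: either $\mathbf{v_1},\ldots,\mathbf{v_k},\mathbf{u}$ are linearly independent, or $\mathbf{u}=\sum_{i=1}^{k}\mathbf{v_i}$. This gives the desired dichotomy for $\mathbf{y}=\mathbf{u}$, completing the proof.

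No obstacle is expected here; this statement is essentially a reformulation of Lemma~\ref{lemma: independent or sum} in the language of the linear system $\mathbf{Ax}=\mathbf{b}$, together with the observation that the construction of $G^{(k)}_{m+1}$ supplies a vertex realising every right-hand side $\mathbf{b}\in\mathbb{F}_2^{k}$. The only thing to be careful about is that one must pick the witness $u$ from the level $m+1$ added on top of the specific clique $\{v_1,\ldots,v_k\}$ (so that Lemma~\ref{lemma: independent or sum} applies), rather than some arbitrary vertex of the graph that happens to satisfy $\mathbf{Au}=\mathbf{b}$.
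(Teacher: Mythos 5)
Your proposal is correct and follows exactly the paper's argument: take the level-$(m+1)$ vertex $u$ attached to the clique with labels $b_1,\ldots,b_k$, observe $\mathbf{Au}=\mathbf{b}$, and apply Lemma~\ref{lemma: independent or sum}. Nothing further is needed.
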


\begin{proof} 
    Let $\mathbf{b}={(b_1,\ldots,b_k)}^T$. 
    By construction, there exists a vertex $u \in V(G^{(k)}_{m+1})$ of level $m+1$ connecting ${(v_i)}_{1 \le i \le k}$ such that $\pi^{(k)}(uv_i) = b_i$ for all $1 \le i \le k$. Then we have $\mathbf{Au=b}$.
    By Lemma~\ref{lemma: independent or sum}, either $\mathbf{v}_1, \ldots, \mathbf{v}_k, \mathbf{u}$ are linearly independent, or $\mathbf{u} = \sum_{i=1}^{k} \mathbf{v}_i$.
\end{proof}



The above actually work for arbitrary $\lambda^{(k)}$, while the following lemmas need the assumption $\lambda^{(k)} \le 2k-1$.

\begin{lemma}\label{lemma: 3}
         Let $v_1, \ldots, v_k$ be vertices of
     a $k$-clique of level $m$ in $G^{(k)}_{m+2}$ and  $\mathbf{A} = {(\mathbf{v}_1, \ldots, \mathbf{v}_k)}^T$.
    Then $\mathbf{Ax=0}$ has a solution $\mathbf{y}$ such that $\mathbf{v}_1, \ldots, \mathbf{v}_k, \mathbf{y}$ are linearly independent.
\end{lemma}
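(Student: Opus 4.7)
The plan is to use Lemma~\ref{lemma: sol independence or sum} with $\mathbf{b} = \mathbf{0}$ as the primary tool, and to handle the degenerate case in which its output is $\mathbf{s} := \sum_{i=1}^{k} \mathbf{v_i}$ by a dimension-counting argument in a suitable quotient space. Set $\mathbf{V} := \mathcal{L}(\mathbf{v_1, \ldots, v_k})$, which has dimension $k$ by Lemma~\ref{lemma: clique independence}, so $\dim \mathbf{V}^{\perp} = k-1$ inside $\mathbb{F}_2^{2k-1}$. Since $\mathbf{Ay = 0}$ is equivalent to $\mathbf{y} \in \mathbf{V}^{\perp}$, and $\mathbf{v_1, \ldots, v_k, y}$ being linearly independent is equivalent to $\mathbf{y \notin V}$, the lemma amounts to producing an element of $\mathbf{V}^{\perp} \setminus \mathbf{V}$, i.e., showing $\mathbf{V}^{\perp} \not\subseteq \mathbf{V}$.

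Applying Lemma~\ref{lemma: sol independence or sum} with $\mathbf{b} = \mathbf{0}$ returns the vector $\mathbf{u_{\mathbf{0}}}$ assigned to the vertex of level $m+1$ adjacent to $v_1, \ldots, v_k$ with all labels $0$. If $\mathbf{u_{\mathbf{0}}}$ is linearly independent from $\mathbf{v_1, \ldots, v_k}$, take $\mathbf{y} := \mathbf{u_{\mathbf{0}}}$ and the proof is over. Otherwise $\mathbf{u_{\mathbf{0}}} = \mathbf{s}$, and we still have to establish $\mathbf{V}^{\perp} \not\subseteq \mathbf{V}$ by other means.

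I would argue this by contradiction. Suppose $\mathbf{V}^{\perp} \subseteq \mathbf{V}$. For each $\mathbf{x} \in \mathbb{F}_2^k$, let $u_{\mathbf{x}}$ be the vertex of level $m+1$ adjacent to $v_1, \ldots, v_k$ with $\pi(u_{\mathbf{x}} v_i) = x_i$, so $\mathbf{A u_{\mathbf{x}} = x}$. By Lemma~\ref{lemma: independent or sum}, $\mathbf{u_{\mathbf{x}}} \in \mathbf{V}$ forces $\mathbf{u_{\mathbf{x}} = s}$ and hence $\mathbf{x = As = Au_{\mathbf{0}} = 0}$; therefore $\mathbf{u_{\mathbf{x}} \notin V}$ whenever $\mathbf{x \neq 0}$. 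Now define $\phi : \mathbb{F}_2^k \to \mathbb{F}_2^{2k-1}/\mathbf{V}$ by $\phi(\mathbf{x}) = [\mathbf{u_{\mathbf{x}}}]$. The identity $\mathbf{A(u_{\mathbf{x+x'}} - u_{\mathbf{x}} - u_{\mathbf{x'}}) = 0}$ places the difference in $\mathbf{V}^{\perp}$ and hence, under the standing assumption, in $\mathbf{V}$, so $\phi$ is linear. The previous step shows $\ker \phi = \{\mathbf{0}\}$, making $\phi$ an injective linear map from a $k$-dimensional space into a $(k-1)$-dimensional one, a contradiction.

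The main obstacle is spotting the auxiliary map $\phi$: its linearity is delicate and is secured precisely by the degenerate hypothesis $\mathbf{V}^{\perp} \subseteq \mathbf{V}$ that one wishes to rule out, which makes $\phi$ tailor-made to refute that hypothesis.
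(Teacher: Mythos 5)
Your proof is correct. The skeleton is the same as the paper's: reduce the claim to showing $\mathbf{V}^{\perp} \not\subseteq \mathbf{V}$ where $\mathbf{V} = \mathcal{L}(\mathbf{v_1}, \ldots, \mathbf{v_k})$, assume the contrary, and use Lemma~\ref{lemma: sol independence or sum} (equivalently Lemma~\ref{lemma: independent or sum}) to force every solution lying in $\mathbf{V}$ to equal $\mathbf{s} = \sum_{i=1}^{k}\mathbf{v_i}$, ending in a dimension count. Where you diverge is the endgame: the paper only invokes the dichotomy for the $k+1$ right-hand sides $\mathbf{b}=\mathbf{0}$ and $\mathbf{b}=\mathbf{Av_j}$, observing that the solution set of $\mathbf{Ax}=\mathbf{Av_j}$ is $\mathbf{v_j}+\mathbf{U} \subseteq \mathbf{V}$, hence contains $\mathbf{s}$, hence $\mathbf{v_j} \in \mathbf{s}+\mathbf{U}=\mathbf{U}$ for every $j$ --- putting $k$ independent vectors into the $(k-1)$-dimensional space $\mathbf{U}$. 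You instead use all $2^k$ vertices $u_{\mathbf{x}}$ and package the contradiction as an injective linear map $\phi:\mathbb{F}_2^k \to \mathbb{F}_2^{2k-1}/\mathbf{V}$ into a $(k-1)$-dimensional quotient; the linearity of $\phi$ is correctly secured by the standing hypothesis $\mathbf{V}^{\perp}\subseteq\mathbf{V}$, and the kernel computation via $\mathbf{x}=\mathbf{As}=\mathbf{Au_{0}}=\mathbf{0}$ is sound. Your version is a touch heavier machinery for the same contradiction, but there is no gap.
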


\begin{proof}
    We prove it by contradiction. By Lemma~\ref{lemma: clique independence}, $\mathbf{v}_1, \ldots, \mathbf{v}_k$ are linearly independent.
    Let $\mathbf{U}$ be the solution space of $\mathbf{Ax=0}$. Suppose $\mathbf{U}$ is a subspace of $\mathcal{L}(\mathbf{v}_1, \ldots, \mathbf{v}_k)$, otherwise we can pick $\mathbf{y}$ from $\mathbf{U} - \mathcal{L}(\mathbf{v}_1, \ldots, \mathbf{v}_k)$ and then $\mathbf{v}_1, \ldots, \mathbf{v}_k, \mathbf{y}$ are linearly independent.
    Since $\mathbf{A}$ is a $k\times (2k-1)$ matrix,
    $\dim(\mathbf{U}) =  (2k-1)- k = k-1$.
    By setting $\mathbf{b = 0}$ in Lemma~\ref{lemma: sol independence or sum}, we have $\sum_{i=1}^{k} \mathbf{v}_i \in \mathbf{U}$.

    For each $j\in [k]$, the solution set of $\mathbf{Ax=A}\mathbf{{}v}_j$ is in $\mathbf{v}_j + \mathbf{U} \subseteq \mathcal{L}(\mathbf{v}_1, \ldots, \mathbf{v}_k)$.
    Then any solution of $\mathbf{Ax=A}\mathbf{v}_j$ cannot be independent from $\mathbf{v}_1, \ldots, \mathbf{v}_k$.
    By Lemma~\ref{lemma: sol independence or sum}, there is a solution $\mathbf{y}$ such that either $\mathbf{v}_1, \ldots, \mathbf{v}_k, \mathbf{y}$ are linearly independent, or $\mathbf{y} = \sum_{i=1}^k \mathbf{v}_i$. 
    Since every solution is in $\mathbf{v}_j+ \mathbf{U} \subseteq \mathcal{L}(\mathbf{v}_1, \dots, \mathbf{v}_k)$, the first outcome does not occur, and so $\sum_{i=1}^k \mathbf{v}_i \in \mathbf{v}_j+ \mathbf{U}$.
    Therefore, $\mathbf{v}_j \in \mathbf{U}$ for every $1 \le j \le k$, which contradicts that $\dim(\mathbf{U}) = k-1$ because $\mathbf{v}_1, \ldots, \mathbf{v}_k$ are linearly independent.
\end{proof}

\begin{definition}\label{D2}
    Let $C$ be a $p$-clique of $G^{(k)}_{m}$ for some $m$. $C$ is called a \emph{bad} clique if $\dim(\textbf{V}_C \cap \textbf{V}_C^{\perp})\ge p-1$, where
    $\textbf{V}_C=\mathcal{L}(\{\mathbf{v}\mid v\in C\})$ and $p\ge 1$.
    \end{definition}

Note that a single vertex is always a bad $1$-clique.
If $\lambda^{(k)} \le  2k-1$, ``large'' bad cliques will finally cause contradictions.
The following lemma is the main part of our proof which states that we can find a ``large'' bad clique when $m$ is sufficiently large.

\begin{lemma}\label{lemma: 0 sol independence}
        If there exists a bad $p$-clique of level $m$ in $G^{(k)}_{m+k+2}$ with $p < k$,
    then there exists a bad clique in $G^{(k)}_{m+k+2}$ of size at least $p+1$.
\end{lemma}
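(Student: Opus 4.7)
The plan starts by embedding $C$ in a $k$-clique $C' = C \cup D$ of $G^{(k)}_m$ with $D = \{w_1,\dots,w_{k-p}\}$; this is possible since $G^{(k)}_m$ is a $k$-tree. By Lemma~\ref{lemma: clique independence} the $k$ vectors assigned to $C'$ are linearly independent, so in particular $\dim \mathbf{V}_C = p$, and the self-orthogonal subspace $W = \mathbf{V}_C \cap \mathbf{V}_C^\perp$ has dimension either $p-1$ or $p$ (the latter meaning $\mathbf{V}_C$ is itself self-orthogonal).

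The target is to produce a vertex $u$ in $G^{(k)}_{m+k+2}$ that is adjacent to every vertex of $C$ and whose assigned vector satisfies $\mathbf{u} \in \mathbf{V}_C^{\perp}$, $\mathbf{u}\cdot\mathbf{u}=0$, and $\mathbf{u}\notin \mathbf{V}_C$. Such a $u$ yields a bad $(p+1)$-clique $C \cup \{u\}$: one checks that $W$ is contained in $\mathbf{V}_{C\cup\{u\}} \cap \mathbf{V}_{C\cup\{u\}}^{\perp}$, and so is $\mathbf{u}$ (using $\mathbf{u}\in \mathbf{V}_C^\perp$, $\mathbf{u}\cdot\mathbf{u}=0$, and $\mathbf{u}\in \mathbf{V}_{C\cup\{u\}}$), and these contributions together span at least a $p$-dimensional subspace since $\mathbf{u}\notin \mathbf{V}_C \supseteq W$.

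To build $u$, I would iteratively invoke Lemma~\ref{lemma: 3}. Set $C^{(0)}=C'$ and, for $i\ge 0$, obtain a vertex $u^{(i)}$ at level $\le m+i+1$ adjacent to $C^{(i)}$ with $\mathbf{u^{(i)}}\perp \mathbf{V}_{C^{(i)}}$ and linearly independent from $\mathbf{V}_{C^{(i)}}$; then let $C^{(i+1)}$ be the $k$-clique obtained by replacing a non-$C$ vertex of $C^{(i)}$ with $u^{(i)}$. This produces a family of pairwise orthogonal vectors, each lying in $\mathbf{V}_C^\perp$ and linearly independent from $\mathbf{V}_C$. A pleasant sufficient condition then applies: whenever $\mathbf{1}\in \mathbf{V}_{C^{(i)}}$ for some $i$, the vector $\mathbf{u^{(i)}}$ is automatically even, and we are done; in particular, the case $\mathbf{1}\in \mathbf{V}_{C'}$ is handled immediately.

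The main obstacle is the case in which every $\mathbf{u^{(i)}}$ produced along the chain is odd and $\mathbf{1}$ never enters the span $\mathbf{V}_{C^{(i)}}$. Here the budget of $k+2$ extra levels has to be used to either force $\mathbf{1}$ into some $\mathbf{V}_{C^{(i)}}$ through further replacements, or to directly construct a bad clique of size larger than $p+1$ by exploiting the identity ``odd + odd = even'': two pairwise orthogonal odd vectors in $\mathbf{V}_C^\perp$ sum to an even vector in $\mathbf{V}_C^\perp$, and the remaining levels should suffice to realize such a combination as an honest vertex adjacent to $C$ via the construction's freedom and Lemma~\ref{lemma: independent or sum}. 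Balancing this parity bookkeeping against the number of iterations needed to replace each of the $k-p$ slots $w_i$ is what explains the precise bound $m+k+2$, and it is the technical core of the argument.
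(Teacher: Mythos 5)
Your proposal correctly identifies the right target (a vertex $u$ adjacent to all of $C$ whose vector is even, lies in $\mathbf{V}_C^{\perp}$, and is outside $\mathbf{V}_C$), and correctly isolates parity as the obstruction. But the proof stops exactly where the real work begins: the case in which every vector produced by iterating Lemma~\ref{lemma: 3} is odd is explicitly left as ``the technical core of the argument,'' with only the hope that ``the remaining levels should suffice.'' That case cannot be dismissed — nothing forces $\mathbf{1}$ to ever enter $\mathbf{V}_{C^{(i)}}$, and indeed the paper's own argument is built around the assumption that every all-zero-labelled neighbour of $C$ is odd (its Claim~\ref{C zero neighbor is odd} proves this is \emph{forced} under the maximality assumption). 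Your fallback, ``odd $+$ odd $=$ even,'' does not close the gap either: the sum of the vectors of two vertices is just an element of $\mathbb{F}_2^{2k-1}$, not the vector of any vertex adjacent to $C$, and there is no mechanism in the construction for realizing a prescribed \emph{vector} as a vertex — you can only prescribe \emph{labels} (inner products), and Lemma~\ref{lemma: independent or sum} then leaves two possibilities for the resulting vector, only one of which you control.

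For comparison, the paper resolves this case not by prescribing all-zero labels to the auxiliary vertices, but by attaching $w_1,\dots,w_{k-p}$ with the carefully chosen labels $\pi(v_iw_j)=\mathbf{v}_i\cdot(\mathbf{v}_1+\mathbf{v}_0)$, which forces each $\mathbf{w}_j$ to be \emph{even} (otherwise $\mathbf{w}_j+\mathbf{v}_1$ witnesses a bad $(p+1)$-clique). It then forms $\beta_j=\mathbf{v}_0+\mathbf{v}_1+\mathbf{w}_j$, runs a dimension count on $W=\mathbf{V}_{C_1\cup C_2}$ and $W^{\perp}$ to conclude $\mathbf{v}_0\in W$ and hence $\mathbf{v}_0+\dots+\mathbf{v}_p+\mathbf{w}_1+\dots+\mathbf{w}_{k-p}=\mathbf{0}$, and finally derives a contradiction from two further witness vertices $x$ and $x^*$ together with Lemma~\ref{lemma: independent or sum}, which pins $\mathbf{x}^*$ down to $\mathbf{w}_{k-p}$ while $\mathbf{x}^*\cdot\mathbf{v}_1\neq\mathbf{w}_{k-p}\cdot\mathbf{v}_1$. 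None of this machinery — the non-zero label trick, the parity claims, the dimension bookkeeping on $W'$, or the final two witness vertices — appears in your proposal, so the proof is incomplete at its decisive step.
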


\begin{proof}
    We prove it by contradiction. Suppose the $p$-clique $C_1$ with vertices $v_1,v_2,\ldots, v_p$ of level $m$ is the largest bad clique in $G^{(k)}_{m+k+2}$, where $p<k$.  Then $\dim(\textbf{V}_{C_1})=p$ by Lemma~\ref{lemma: clique independence}.
    Let $\textbf{U}=\textbf{V}_{C_1}\cap \textbf{V}_{C_1}^{\perp}$. Then  $\dim(\textbf{U})\ge p-1$ by Definition~\ref{D2}. For every $\mathbf{u}_1, \mathbf{u}_2\in \textbf{U}$, we have $\mathbf{u}_1\perp \mathbf{u}_2$ which means $\textbf{U}$ is self-orthogonal.

    We first show that $\dim(\textbf{U})= p-1$. 
    Suppose otherwise $\dim(\textbf{U})= p$.
    Then $\textbf{U}=\textbf{V}_{C_1} \subseteq \textbf{V}_{C_1}^{\perp}$. Since $p<k$, by the construction of $G^{(k)}_{m+k+2}$,  there exists a vertex $u$ of level $m+1$ such that $uv_i\in E(G^{(k)}_{m+k+2})$ and $\pi^{(k)}(uv_i)=0$ for each $i\in[p]$. Let  $C_1' \coloneqq {C_1} \cup \{u\}$.
    By Lemma~\ref{lemma: clique independence}, we have that $\mathbf{u}, \mathbf{v}_1, \ldots, \mathbf{v}_p$ are linearly independent.
    By $\pi^{(k)}(uv_i)=0$ for each $i\in[p]$, we have that $\mathbf{u} \perp \textbf{V}_{C_1}$. 
    Then for every $\mathbf{w} \in \textbf{V}_{C_1}$, we have $\mathbf{u} \perp \mathbf{w}$. 
    We also have $\mathbf{w} \perp \textbf{V}_{C_1}$ since $\textbf{V}_{C_1} \subseteq \textbf{V}_{C_1}^{\perp}$.  Therefore, $\mathbf{w} \perp \textbf{V}_{C_1'}$ and by the arbitrariness of $\mathbf{w}$, we have that $\textbf{V}_{C_1} \subseteq \textbf{V}_{C_1'} \cap \textbf{V}_{C_1'}^{\perp}$ which implies $\dim(\textbf{V}_{C_1'} \cap \textbf{V}_{C_1'}^{\perp})\ge p$. Hence, $C_1'$ is a bad $(p+1)$-clique,  a contradiction with the maximality of ${C_1}$, and hence, $\dim(\textbf{U})= p-1$.
    In fact, we conclude that $\textbf{U}$ is a self-orthogonal $(p-1)$-dim subspace of $\mathbb{F}_2^{2k-1}$ and each vector in $\textbf{U}$ is even. 

    Since  $\dim(\textbf{U})= p-1$, there exists $i \in \{1,2,\ldots, p\}$ such that $\mathbf{v}_i \notin \mathbf{U}$, say $i=1$.
   Then $\mathcal{L}(\textbf{U},\textbf{v}_1)=\textbf{V}_{C_1}$.
   If $\mathbf{v}_1$ is even, then $\mathbf{v}_1 \perp \mathcal{L}(\textbf{U},\textbf{v}_1)$, which contradicts with $\mathbf{v}_1 \notin \textbf{U}$. Thus we have that $\textbf{v}_1$ is odd. 

\begin{claim}\label{C zero neighbor is odd}
    If $u$ is a vertex in $G^{(k)}_{m+k}$ such that $uv_i\in E(G^{(k)}_{m+k})$ and $\pi^{(k)}(uv_i)=0$ for each $i\in[p]$, then $\textbf{u}$ is odd.
\end{claim}

\begin{proof}[of Claim~\ref{C zero neighbor is odd}]
Suppose $\textbf{u}$ is even. Then $\textbf{u}\perp \mathcal{L}(\textbf{u},\textbf{V}_{C_1})$. We have $\textbf{u}\in \textbf{V}_{C_1\cup\{u\}}\cap \textbf{V}_{C_1\cup\{u\}}^{\perp}$ and $\textbf{U}\subseteq \textbf{V}_{C_1\cup\{u\}}\cap \textbf{V}_{C_1\cup\{u\}}^{\perp}$. 
From Lemma~\ref{lemma: clique independence}, $\mathbf{v}_1, \ldots, \mathbf{v}_p, \mathbf{u}$ are linearly independent, so $\mathbf{u} \notin \textbf{V}_{C_1}$.
Recall that $\mathbf{U} = \mathbf{V}_{C_1} \cap \mathbf{V}_{C_1}^{\perp}$, then we have $\mathbf{u} \notin \mathbf{U}$.
Then $\dim(\textbf{V}_{C_1\cup\{u\}}\cap \textbf{V}_{C_1\cup\{u\}}^{\perp})\ge p$. Thus $ C_1\cup \{u\}$ is a bad $(p+1)$-clique, which contradicts the maximality of $C_1$. 
\end{proof}
\vspace{.2cm}

Fix a vertex $v_0$ of level $m+1$ such that $v_0v_i\in E(G^{(k)}_{m+k})$ and $\pi^{(k)}(v_0v_i)=0$ for each $i\in[p]$. Then $\mathbf{v}_0 \perp \mathbf{V}_{C_1}$.
By the construction of $G^{(k)}_{m+k}$, there exists a set of vertices $C_2=\{w_1,w_2,\dots, w_{k-p}\} \subseteq V(G^{(k)}_{m+k})$ satisfying the following:
\begin{enumerate}
    \item $\{v_0,v_1,\ldots,v_p,w_1,w_2,\ldots, w_{k-p}\}$ is a $(k+1)$-clique;
    \item $\pi^{(k)}(w_i w_j)=0$, for each $i,j\in[k-p], i\neq j$;
    \item and $\pi^{(k)}(v_i w_j)=\textbf{v}_i \cdot (\textbf{v}_1+\textbf{v}_0)$, for each $i\in[0,p],j\in[k-p]$.
\end{enumerate}

The existence of such $C_2$ is guaranteed because $C_1$ must be in some $k$-clique, and then we can pick $\mathbf{w}_1, \ldots, \mathbf{w}_{k-p}$ one by one according to the construction of $G^{(k)}_{m+k}$.

\begin{claim}\label{w even}
    $\textbf{w}_j$ is even for each $j\in[k-p]$.
\end{claim}

\begin{proof}[of Claim~\ref{w even}]
Suppose there is $j\in[k-p]$ such that $\textbf{w}_j$ is odd. Let  $\boldsymbol{\beta} \coloneqq \textbf{w}_j+\textbf{v}_1$. Recall that $\textbf{v}_1$ is odd.
Then 
\[
    \boldsymbol{\beta} \cdot \textbf{w}_j=\textbf{w}_j\cdot \textbf{w}_j+\textbf{v}_1\cdot \textbf{w}_j=\textbf{w}_j\cdot \textbf{w}_j+\textbf{v}_1\cdot (\textbf{v}_1+ \textbf{v}_0)=0.
\]
On the other hand, for every $v_i\in C_1$, 
\[
    \boldsymbol{\beta} \cdot \textbf{v}_i=\textbf{w}_j\cdot \textbf{v}_i+\textbf{v}_1\cdot \textbf{v}_i=\textbf{v}_i\cdot \textbf{v}_1+\textbf{v}_i\cdot \textbf{v}_0+\textbf{v}_1\cdot \textbf{v}_i=0.
\]
Hence $\boldsymbol{\beta} \perp \textbf{V}_{C_1\cup\{w_j\}}$. We have $\boldsymbol{\beta} \in \textbf{V}_{C_1\cup\{w_j\}}\cap \textbf{V}_{C_1\cup\{w_j\}}^{\perp}$ and $\textbf{U}\subseteq \textbf{V}_{C_1\cup\{w_j\}}\cap \textbf{V}_{C_1\cup\{w_j\}}^{\perp}$. From Lemma~\ref{lemma: clique independence} and $\dim(\textbf{U})= p-1$, we know $\textbf{w}_j \notin \textbf{U}$. Then $\dim(\textbf{V}_{C_1\cup\{w_j\}}\cap \textbf{V}_{C_1\cup\{w_j\}}^{\perp})\ge p$ which implies  $ C_1\cup \{w_j\}$ is a bad $(p+1)$-clique, a contradiction with the maximality of $C_1$.
\end{proof}

\vspace{.2cm}

\begin{table}
    \centering
    \begin{tabular}{|c|c|c|c|c|c|c|c|}
    \hline
         & $\textbf{v}_0$ & $\textbf{v}_1$  & $\textbf{v}_i$ & $\textbf{w}_{j_1}$ & $\textbf{w}_{j_2}$ & $\boldsymbol{\beta}_{j_1}$ &  $\boldsymbol{\beta}_{j_2}$\\
    \hline
        $\textbf{v}_0$ & 1 & 0 & 0 & 1 & 1 & 0 & 0\\
    \hline
        $\textbf{v}_1$ & 0 & 1 & $\textbf{v}_1\cdot \textbf{v}_i$ & 1 & 1 & 0 & 0\\
    \hline
        $\textbf{v}_i$ & 0 & $\textbf{v}_1\cdot \textbf{v}_i$ & $\textbf{v}_i\cdot \textbf{v}_i$ & $\textbf{v}_1\cdot \textbf{v}_i$ & $\textbf{v}_1\cdot \textbf{v}_i$ & 0 & 0\\
    \hline
        $\textbf{w}_{j_1}$ & 1 & 1 & $\textbf{v}_1\cdot \textbf{v}_i$ & 0 & 0 & 0 & 0\\
    \hline
        $\textbf{w}_{j_2}$ & 1 & 1 & $\textbf{v}_1\cdot \textbf{v}_i$ & 0 & 0 & 0 & 0\\
    \hline
        $\boldsymbol{\beta}_{j_1}$ & 0 & 0 & 0 & 0 & 0 & 0 & 0\\
    \hline
        $\boldsymbol{\beta}_{j_2}$ & 0 & 0 & 0 & 0 & 0 & 0 & 0\\
    \hline
    \end{tabular}
    \caption{The inner products for $i\in[p]$ and $j_1,j_2\in[k-p]$}\label{tab}
\end{table}

Now we complete the proof of Lemma~\ref{lemma: 0 sol independence}. For each $j\in[k-p]$, let $\boldsymbol{\beta}_j=\textbf{v}_0+\textbf{v}_1+\textbf{w}_j$. By Claim~\ref{C zero neighbor is odd},  $\textbf{v}_{0}$ is odd. By Claim~\ref{w even}, $\textbf{w}_j$ is even for each $j\in[k-p]$.
 It is not difficult to show that  $\boldsymbol{\beta}_j\perp \textbf{v}_{0}$, $\boldsymbol{\beta}_j\perp \textbf{V}_{C_1}$ and $\boldsymbol{\beta}_j\perp \textbf{V}_{C_2}$. Check Table~\ref{tab} for the inner products between the vectors that we are working with.
Let $C_3=C_1\cup C_2$ and $\mathbf{W}=\textbf{V}_{C_3}=\textbf{V}_{C_1}+\textbf{V}_{C_2}$. Then  $\dim(\mathbf{W})=k$ from Lemma~\ref{lemma: clique independence}. Since $\mathbf{W} \subseteq  \mathbb{F}_2^{2k-1}$, we have  $\dim(\mathbf{W} ^{\perp})=k-1$. Let $\mathbf{W} ^{\prime}=\mathcal{L}(\textbf{U},\boldsymbol{\beta}_1,\dots,\boldsymbol{\beta}_{k-p})$. Since $\textbf{U}\perp \mathbf{W}$ and $\boldsymbol{\beta}_j\perp \mathbf{W}$ for each $j\in[k-p]$, we have  $\mathbf{W}^{\prime}\subseteq \mathbf{W}^{\perp}$. Note that $\textbf{V}_{C_1},\textbf{V}_{C_2}\subseteq \mathcal{L}(\textbf{v}_0,\textbf{v}_1,\mathbf{W}^{\prime})$. We have $\mathbf{W} \subseteq \mathcal{L}(\textbf{v}_0,\textbf{v}_1,W^{\prime})$ which implies  $\dim(\mathbf{W}^{\prime})\ge k-2$. If $\textbf{v}_0\notin \mathbf{W}$, then  $\dim(\mathbf{W}^{\prime})\ge k-1$ which implies $\mathbf{W}^{\perp}=\mathbf{W}^{\prime}$. Since $\textbf{v}_0\perp \mathbf{W}^{\prime}$, we have $\textbf{v}_0\perp \mathbf{W}^{\perp}$, a contradiction with $\textbf{v}_0\notin \mathbf{W}$. Hence $\textbf{v}_0\in \mathbf{W}$.
By Lemma~\ref{lemma: independent or sum}, we have $ \textbf{v}_0+\cdots+\textbf{v}_p+\textbf{w}_1+\cdots+\textbf{w}_{k-p}=0 $.

Since $\textbf{v}_0\in \mathbf{W}$, we have $\boldsymbol{\beta}_j\in \mathbf{W}$ for each $j$ which implies $\mathbf{W}^{\prime}\subseteq \mathbf{W}$. So  $\mathbf{W}^{\prime}\subseteq \mathbf{W}\cap \mathbf{W}^{\perp}$.
If  $\dim(\mathbf{W}^{\prime})\ge k-1$, then $C_3$ is a bad $k$-clique,  a contradiction with the maximality of $C_1$. Hence $\dim(\mathbf{W}^{\prime})=k-2$. Let $\boldsymbol{\alpha} \in \mathbf{W}^{\perp}\backslash \mathbf{W}^{\prime}$ such that $\mathbf{W}^{\perp}=\mathcal{L}(\mathbf{W}^{\prime},\boldsymbol{\alpha})$. By the construction of $G^{(k)}_{m+k}$, there exists  a vertex $x$ connecting to all vertices of $C_3$ such that $\pi^{(k)}(xy) = 0$ for each $y \in C_3$. Then $\textbf{x}\in \mathbf{W}^{\perp}$.
From Claim~\ref{C zero neighbor is odd}, $\textbf{x}$ is odd. 
Since $\textbf{U}$ is a self-orthogonal subspace and $\boldsymbol{\beta}_i\bot \boldsymbol{\beta}_j$ for every $i,j\in [k-p]$ (see Table 1), we have that
the vectors in $\mathbf{W}^{\prime}$ are all even.  By $\textbf{x}\in \mathbf{W}^{\perp}=\mathcal{L}(\mathbf{W}^{\prime},\boldsymbol{\alpha})$ and $\textbf{x}$ being odd, we have $\boldsymbol{\alpha}$ is odd. 
Let $C^*=\{v_0,\ldots,v_p,w_1,\ldots,w_{k-p-1}\}$ (if $p=k-1$, then let $C^*=\{v_0,\ldots,v_p\}$). Then there is 
 $x^*$ connecting to all vertices of $C^*$ such that $\pi^{(k)}(x^* y) =\textbf{v}_0\cdot \textbf{y}$ for each $y\in C^*$. 
 
 Recall that $\mathbf{W} = \mathcal{L}(\mathbf{v}_1, \ldots, \mathbf{v}_p, \mathbf{w}_1, \ldots, \mathbf{w}_{k-p})$ and $\mathbf{V}_{C^*} = \mathcal{L}(\mathbf{v}_0, \ldots, \mathbf{v}_p, \mathbf{w}_1, \ldots, \mathbf{w}_{k-p-1})$. 
 Since $ \textbf{v}_0+\cdots+\textbf{v}_p+\textbf{w}_1+\cdots+\textbf{w}_{k-p}=0 $, we have that $\mathbf{W}=\textbf{V}_{C^*}$.
  Then $\textbf{x}^*\in \textbf{v}_0 + \mathbf{W}^{\perp}$. Since $\pi^{(k)}(x^* v_i) =\textbf{v}_0\cdot \textbf{v}_i=0$ for each $i\in [k]$,
  from Claim~\ref{C zero neighbor is odd}, $\textbf{x}^*$ is odd. Note that $\textbf{x}^*\in \textbf{v}_0+\mathcal{L}(\boldsymbol{\alpha},\mathbf{W}^{\prime})$. Since all vectors in $\mathbf{W}^{\prime}$ are  even and $\textbf{v}_0,\boldsymbol{\alpha}$ are odd, we have $\textbf{x}^*\in \textbf{v}_0+\mathbf{W}^{\prime}\subseteq \mathbf{W}$.
  From Lemma~\ref{lemma: independent or sum}, $\textbf{x}^*= \textbf{v}_0+\cdots+\textbf{v}_p+\textbf{w}_1+\cdots+\textbf{w}_{k-p-1}=\textbf{w}_{k-p}$. Since $\textbf{x}^{*}\cdot \mathbf{v}_1=0\neq 1=\textbf{w}_{k-p}\cdot \mathbf{v}_1$, we derive a contradiction.
\end{proof}

With the help of the above lemmas, we can derive a contradiction when $\lambda^{(k)} \le  2k-1$ and hence, $\lambda^{(k)} = 2k$.

\begin{theorem}\label{thm: lambda}
    $\lambda^{(k)} = 2k$.
\end{theorem}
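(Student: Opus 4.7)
The plan is to derive a contradiction from the standing assumption $\lambda^{(k)} \le 2k-1$. Under this assumption, Proposition~\ref{prop: diam vector} supplies, for every $m$, a $(2k-1)$-dimensional vector assignment $v \mapsto \mathbf{v}$ of $G^{(k)}_m$ with label $\pi^{(k)}$, so that all of the preceding lemmas on bad cliques apply.

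First I would iterate Lemma~\ref{lemma: 0 sol independence} to manufacture a bad $k$-clique. Any single vertex of $G^{(k)}_0$ is a bad $1$-clique at level $0$. Given a bad $p$-clique at level $m_p$ with $p < k$, Lemma~\ref{lemma: 0 sol independence} produces a bad $(p+1)$-clique at some level $m_{p+1} \le m_p + k + 2$ inside $G^{(k)}_{m_p + k + 2}$. Iterating this $k-1$ times yields a bad $k$-clique $C = \{v_1,\ldots,v_k\}$ at some level $m \le (k-1)(k+2)$, and we work inside any $G^{(k)}_M$ with $M \ge m+2$.

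The core of the argument is then a short dimension count. By Lemma~\ref{lemma: clique independence}, the vectors $\mathbf{v_1},\ldots,\mathbf{v_k}$ are linearly independent, so $\dim(\mathbf{V}_C) = k$ and the orthogonal complement in $\mathbb{F}_2^{2k-1}$ satisfies $\dim(\mathbf{V}_C^{\perp}) = (2k-1)-k = k-1$. Because $C$ is bad, $\mathbf{U} := \mathbf{V}_C \cap \mathbf{V}_C^{\perp}$ has dimension at least $k-1$, and since $\mathbf{U} \subseteq \mathbf{V}_C^{\perp}$ and the latter has dimension exactly $k-1$, equality holds: $\mathbf{V}_C^{\perp} = \mathbf{U} \subseteq \mathbf{V}_C$. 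On the other hand, applying Lemma~\ref{lemma: 3} to the $k$-clique $C$ produces a vector $\mathbf{y}$ with $\mathbf{A}\mathbf{y} = \mathbf{0}$, so that $\mathbf{y} \in \mathbf{V}_C^{\perp}$, and with $\mathbf{v_1},\ldots,\mathbf{v_k},\mathbf{y}$ linearly independent, so that $\mathbf{y} \notin \mathbf{V}_C$. This directly contradicts $\mathbf{V}_C^{\perp} \subseteq \mathbf{V}_C$, forcing $\lambda^{(k)} \ge 2k$ and hence $\lambda^{(k)} = 2k$ in view of Theorem~\ref{thm: havet}.

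All the genuine combinatorial work has been done in Lemma~\ref{lemma: 0 sol independence}; the present theorem simply packages its iterated existence statement with the one-line dimension argument above. The only technicality to watch is level bookkeeping: we need $M \ge m+2$ so that both Lemma~\ref{lemma: clique independence} and Lemma~\ref{lemma: 3} are available for the final bad $k$-clique, which is ensured by choosing $M = (k-1)(k+2)+2$.
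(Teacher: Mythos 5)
Your argument is correct and is essentially the paper's own proof: iterate Lemma~\ref{lemma: 0 sol independence} from a bad $1$-clique up to a bad $k$-clique, then combine the dimension count $\dim(\mathbf{V}_{C}^{\perp})=k-1\le\dim(\mathbf{V}_{C}\cap\mathbf{V}_{C}^{\perp})$ (forcing $\mathbf{V}_{C}^{\perp}\subseteq\mathbf{V}_{C}$) with Lemma~\ref{lemma: 3} to reach a contradiction. Your version merely makes the level bookkeeping and the final contradiction more explicit than the paper does (the paper works in $G^{(k)}_{k(k+2)}$, which subsumes your bound $M=(k-1)(k+2)+2$).
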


\begin{proof}
    Suppose $\lambda^{(k)} \le 2k-1$. By Lemma~\ref{lemma: 0 sol independence}, the largest bad clique $C_0$ in $G^{(k)}_{k(k+2)}$ is of size $k$. Then $\dim(\textbf{V}_{C_0}^{\perp}\cap \textbf{V}_{C_0})\ge k-1$. By Lemma~\ref{lemma: clique independence}, $\dim(\textbf{V}_{C_0})=k$ and then $\dim(\textbf{V}_{C_0}^{\perp})=k-1$ by $\textbf{V}_{C_0}\subseteq \mathbb{F}_2^{2k-1}$.
    We have $\textbf{V}_{C_0}^{\perp}\subseteq \textbf{V}_{C_0}$ by checking the dimensions. 
    Then let $\mathbf{v}_1, \ldots, \mathbf{v}_k$ be a basis of $\textbf{V}_{C_0}$. 
    Then every solution $\mathbf{x}$ of $\mathbf{Ax} = \mathbf{0}$ is in $\textbf{V}_{C_0}^{\perp} \subseteq \textbf{V}_{C_0}$.
    Thus, $\mathbf{v}_1, \ldots, \mathbf{v}_k, \mathbf{x}$ can not be linearly independent, which contradicts Lemma~\ref{lemma: 3}.
\end{proof}

Now we can give the proof of our main Theorem.

\begin{proof}[of Theorem~\ref{thm: main}]
For every $k \ge 1$, we have $\lambda^{(k)} = 2k$ by Theorem~\ref{thm: lambda}. Then there exists $M_k$ such that for every $m \ge M_k$, $\diam(\mathcal{I}(G^{(k)}_m)) = 2k$. Thus, for all $m \ge M_k$, the graphs $G^{(k)}_m$ are the desired graphs of treewidth at most $k$ and inversion diameter $2k$.
\end{proof}
\begin{figure}[t]
	\centering         
	\includegraphics[width=0.5\linewidth]{./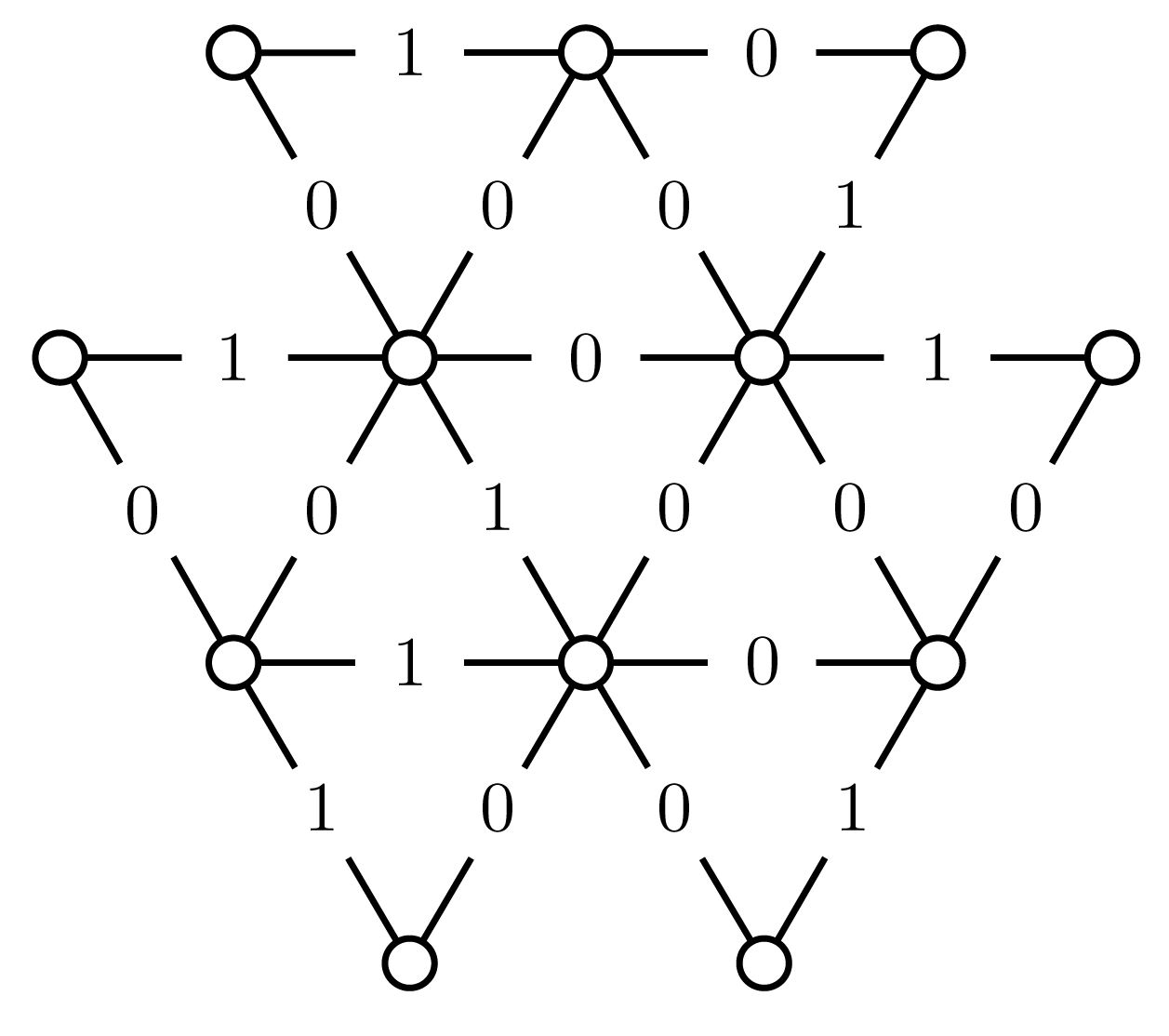}
	\caption{An example of outer-planar graph with labelled edges of inversion diameter $4$ verified by computer.}\label{fig: outerplanar}
\end{figure}

Note that every outer-planar graph is of treewidth $2$ and hence has inversion diameter at most $4$ by Lemma~\ref{thm: havet}.
We construct an outer-planar graph with inversion diameter $4$ verified by computer as Figure~\ref{fig: outerplanar}.
The idea is to construct an outerplanar graph as ``dense'' as possible, and the labelling is searched by computer.
The code is available on \href{https://github.com/handsome12138/InversionDiameter}{GitHub}.\footnote{https://github.com/handsome12138/InversionDiameter}
Therefore, the upper bound $\diam(\mathcal{I}(G)) \le 4$ for every outer-planar graph $G$ is tight.

\section{Proof of Theorem~\ref{thm: Delta 3 diam}}\label{sec: delta 3}

In this section, we intend to give the proof of Theorem~\ref{thm: Delta 3 diam}.

From the definition, if $\diam(\mathcal{I}(G)) = k$, then for every graph $G'$ obtained by removing a vertex from $G$, we have $\diam(\mathcal{I}(G')) \ge k-1$.
In other words, removing one vertex can decrease the inversion diameter by at most $1$.
Let $G$ be a graph. We say $G$ is \emph{$4$-diameter-critical} if $\diam(\mathcal{I}(G)) = 4$ and for every proper subgraph $G'$, $\diam(\mathcal{I}(G')) \le 3$.
Clearly, a $4$-diameter-critical graph is connected.
If $G$ is $4$-diameter-critical, by Proposition~\ref{prop: diam vector}, there exists a labelling $\pi$ such that there is no $3$-dim vector assignment of $G$ respecting $\pi$. We call such a labelling $\pi$ a \emph{bad labelling}.

Let $G$ be a $4$-diameter-critical graph respecting a bad labelling $\pi$ and
$H$ a non-empty induced subgraph of $G$. 
Denote by $N_G(H) = \{ v \in V(G)-V(H) \mid \exists u \in V(H), uv \in E(G) \}$ the neighbors of $H$ in $G-H$.
By the definition of a $4$-diameter-critical graph, $G-H$ admits a $3$-dim vector assignment $f: V(G-H) \rightarrow \mathbb{F}_2^3$ respecting $\pi|_{G-H}$.
For a vertex $v \in N_G(H)$, define $\mathcal{A}_{H,f}(v) = \{ \mathbf{v } \in \mathbb{F}_2^3 \mid \mathbf{v} \cdot f(u) = \pi(uv), \text{~for every~} uv \in E(G-H) \}$. Note that $f(v) \in \mathcal{A}_{H,f}(v)$.
Here $\mathcal{A}_{H,f}(v)$ is the set of all possible vectors that can be assigned to $v$ while keeping the vector assignment valid on $G-H$.

Let $H$ be a fixed induced subgraph of $G$ and $f$ a fixed $3$-dim vector assignment of $G-H$ respecting $\pi|_{G-H}$.
An \emph{available boundary family} is a family of sets ${(\mathcal{B}_f(v))}_{v \in N_G(H)}$ satisfying the following properties.
\begin{enumerate}
    \item $f(v) \in \mathcal{B}_f(v) \subseteq \mathcal{A}_{H,f}(v)$, and
    \item $\{ v \in N_G(H) \mid |\mathcal{B}_f(v)| \ge 2 \}$ is an independent set in $G-H$.
\end{enumerate}
When there is no ambiguity, we may ignore the subscript $f$ in $\mathcal{B}_f$.

The following lemma states that if we already have a vector assignment of $G-H$, then we can reassign the vectors $v \in N_G(H)$ using an available boundary family and the result is also a valid vector assignment.

\begin{lemma}\label{lemma: available boundary sets}
Let $H$ be an induced subgraph of a $4$-diameter-critical graph $G$ respecting a bad labelling $\pi$.
Let $f$ be a $3$-dim vector assignment of $G-H$ with $\pi|_{G-H}$ and ${(\mathcal{B}_f(v))}_{v \in N_G(H)}$ an available boundary family.
Then every $3$-dim vector assignment $g$ of $G-H$ satisfying
\begin{enumerate}
    \item $g(v) = f(v)$, $\forall v \in V(G-H) - N_G(H)$, and
    \item $g(v) \in \mathcal{B}_f(v) $, $\forall v \in N_G(H)$,
\end{enumerate}
is a $3$-dim vector assignment of $G-H$ with $\pi|_{G-H}$.
\end{lemma}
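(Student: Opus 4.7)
The plan is to verify the edge-equation $g(u)\cdot g(v)=\pi(uv)$ for every edge $uv\in E(G-H)$ by a short case analysis on how many endpoints of the edge lie in $N_G(H)$. Since $g$ acts on the vertices of the graph $G-H$ and we already know $\pi|_{G-H}$, proving that these scalar products match is exactly the statement that $g$ is a $3$-dim vector assignment on $G-H$ with $\pi|_{G-H}$.

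First I would dispose of the easy case in which neither endpoint lies in $N_G(H)$. Here conditions 1 implies $g(u)=f(u)$ and $g(v)=f(v)$, so $g(u)\cdot g(v)=f(u)\cdot f(v)=\pi(uv)$ because $f$ is already a valid assignment on $G-H$. Next I would handle the mixed case, where exactly one endpoint, say $v$, lies in $N_G(H)$; then $g(u)=f(u)$ and $g(v)\in\mathcal{B}_f(v)\subseteq\mathcal{A}_f(v)$, so by the very definition of $\mathcal{A}_f(v)$ applied to the edge $uv\in E(G-H)$, we get $g(v)\cdot f(u)=\pi(uv)$, which is what we need.

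The main (and only slightly delicate) case is when both endpoints $u,v$ lie in $N_G(H)$. This is precisely where the independence condition built into the notion of an available boundary family is used. Because $\{w\in N_G(H)\mid |\mathcal{B}_f(w)|\ge 2\}$ is an independent set in $G-H$ and $uv\in E(G-H)$, at least one of $u,v$, say $u$, satisfies $|\mathcal{B}_f(u)|=1$. Combined with $f(u)\in\mathcal{B}_f(u)$ and $g(u)\in\mathcal{B}_f(u)$, this forces $g(u)=f(u)$. Then the argument of the mixed case applies verbatim, using $g(v)\in\mathcal{A}_f(v)$ together with the edge $uv$: $g(u)\cdot g(v)=f(u)\cdot g(v)=\pi(uv)$.

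I expect no real obstacle here; the lemma is essentially a packaging statement that tells us the constraints encoded in $\mathcal{A}_f$ together with the independence condition are exactly what is needed to reassign boundary vectors simultaneously without breaking any edge equation. The only thing worth being careful about is making sure one does not accidentally invoke $\mathcal{A}_f(v)$ on an edge whose other endpoint also had its vector changed; the independence condition is precisely designed to rule that out, so pointing this out explicitly in the write-up is the cleanest way to finish.
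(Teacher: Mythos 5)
Your proof is correct and follows essentially the same route as the paper: both arguments reduce to checking edges with at most one endpoint whose vector changed, using the independence condition to rule out an edge between two reassigned vertices and the definition of $\mathcal{A}_f(v)$ to handle the mixed case. The only cosmetic difference is that the paper phrases this via the independent set of vertices where $g\neq f$, while you organize it as a case analysis on how many endpoints lie in $N_G(H)$.
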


\begin{proof}
    We only need to verify that $g(v)\cdot g(u) = \pi(uv)$ for all $uv \in E(G-H)$.
    Note that $A \coloneqq \{ v \in V(G-H) \mid g(v) \neq f(v) \} \subseteq \{ v \in N_G(H) \mid |\mathcal{B}_f(v)| \ge 2 \}$ from the definition.
    Then $\{ v \in V(G-H) \mid g(v) \neq f(v) \}$ is an independent set.
    Since we already have $f(v)\cdot f(u) = \pi(uv)$ for all $uv \in E(G-H)$ and $\{ v \in V(G-H) \mid g(v) \neq f(v) \}$ is an independent set, we now only need to verify that $g(v)\cdot g(u) = \pi(uv)$ for all $uv \in E(G-H)$ satisfying $u \in A$ and $v \notin A$.
    Since $g(u) \in \mathcal{B}_f(u)$ and $g(v) = f(v)$, we have $g(v)\cdot g(u) = \pi(uv)$ by the definition of $\mathcal{B}_f(u)$.
\end{proof}

We say that $H$ is \emph{reducible} respecting the labelling $\pi$ if there exists a $3$-dim vector assignment $f$ of $G-H$ respecting $\pi|_{G-H}$, and an available boundary family ${(\mathcal{B}_f(v))}_{v \in N_G(H)}$ and a $3$-dim vector assignment $g$ on $G[V(H) \cup N_G(H)]$ with  $\pi|_{G[V(H) \cup N_G(H)]}$ such that $g(v) \in \mathcal{B}_f(v)$ for every $v \in N_G(H)$. The following lemma states that there is no reducible subgraph of a $4$-diameter-critical graph.

\begin{lemma}\label{lemma: reducible}
Let $G$ be a $4$-diameter-critical graph respecting a bad labelling $\pi$.
Then there is no reducible induced subgraph of $G$.
\end{lemma}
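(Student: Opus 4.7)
The strategy is to argue by contradiction. Suppose there is a reducible induced subgraph $H$ of $G$. The plan is to glue the assignment $g$ on $G[V(H)\cup N_G(H)]$ guaranteed by reducibility with an assignment $f$ on $G-H$ (which exists by $3$-criticality, since $G-H$ is a proper subgraph of $G$) into a single $3$-dim vector assignment of the whole graph $G$ with the label $\pi$. This would contradict $\pi$ being a bad label.

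More concretely, since $H$ is reducible, unpacking the definition gives a $3$-dim vector assignment $f$ of $G-H$ with $\pi|_{G-H}$, an available boundary family $(\mathcal{B}_f(v))_{v \in N_G(H)}$, and a $3$-dim vector assignment $g$ of $G[V(H)\cup N_G(H)]$ with $\pi|_{G[V(H) \cup N_G(H)]}$ such that $g(v) \in \mathcal{B}_f(v)$ for every $v \in N_G(H)$. I would then define $h: V(G) \to \mathbb{F}_2^3$ by setting $h(v) = g(v)$ if $v \in V(H) \cup N_G(H)$ and $h(v) = f(v)$ otherwise.

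To show $h$ realises $\pi$ on every edge of $G$, I would split $E(G)$ according to where the endpoints sit. Edges with both endpoints in $V(H) \cup N_G(H)$ are handled directly by $g$. Edges joining $V(H)$ to $V(G-H)$ must have their $V(G-H)$-endpoint in $N_G(H)$ by the very definition of $N_G(H)$, so they also lie inside $V(H) \cup N_G(H)$ and are again handled by $g$. The remaining edges lie entirely inside $V(G-H)$, and for these I would invoke Lemma~\ref{lemma: available boundary sets} applied to the restriction $h|_{V(G-H)}$: this restriction agrees with $f$ away from $N_G(H)$ and takes values in $\mathcal{B}_f(v)$ on $N_G(H)$, so the lemma yields that $h|_{V(G-H)}$ is a valid $3$-dim vector assignment of $G-H$ with $\pi|_{G-H}$. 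Combining the three cases shows that $h$ is a $3$-dim vector assignment of $G$ with label $\pi$, contradicting the badness of $\pi$ by Proposition~\ref{prop: diam vector}.

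The only subtle point is ensuring consistency on the boundary $N_G(H)$: the two functions $f$ and $g$ generally disagree there, so one cannot simply take their union. However, the definition of \emph{reducible} is engineered so that $g(v) \in \mathcal{B}_f(v)$ on $N_G(H)$, and Lemma~\ref{lemma: available boundary sets} is engineered precisely to absorb exactly this kind of independent-set perturbation of $f$. Hence the gluing really is just a matter of bookkeeping, and no further combinatorial work is needed beyond the two supporting notions.
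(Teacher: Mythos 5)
Your proof is correct and follows essentially the same route as the paper: define the glued map $h$ from $f$ and $g$, verify edges inside $G[V(H)\cup N_G(H)]$ via $g$, verify edges inside $G-H$ via Lemma~\ref{lemma: available boundary sets}, and note that no edges cross from $V(H)$ to $V(G-H)\setminus N_G(H)$. No gaps.
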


\begin{proof}
    Suppose  $H$ is an induced reducible subgraph of $G$.
    Then $G-H$ admits a $3$-dim vector assignment $f$ respecting the labelling $\pi|_{G-H}$, an available boundary family ${(\mathcal{B}_f(v))}_{v \in N_G(H)}$ and a $3$-dim vector assignment $g$ on $G[V(H) \cup N_G(H)]$ such that $g(v) \in \mathcal{B}_f(v)$ for every $v \in N_G(H)$.
    Define a function $h: V(G) \rightarrow \mathbb{F}_2^3$ by letting $h(v) = f(v)$ for every $v \in V(G-H) - N_G(H)$ and $h(v) = g(v)$ for every $v \in N_G(H)\cup V(H)$.
    By the definition, $h|_{G[V(H) \cup N_G(H)]}$ is a $3$-dim vector assignment respecting the labelling $\pi|_{G[V(H) \cup N_G(H)]}$.
    By Lemma~\ref{lemma: available boundary sets}, $h|_{G-H}$ is a $3$-dim vector assignment of $G-H$ respecting the labelling $\pi|_{G-H}$. Since there is no edge between $V(G-H) - N_G(H)$ and $V(H)$, $h$ is a $3$-dim vector assignment of $G$ with $\pi$, a contradiction.
\end{proof}

In the following, we are going to find certain reducible structures in $4$-diameter-critical graphs.

\begin{lemma}\label{lemma: reducible all zero adjacent edges}
Let $G$ be a $4$-diameter-critical graph respecting a bad labelling $\pi$.
For every vertex $v \in V(G)$, at least one edge adjacent to $v$ is labelled $1$ by $\pi$.
\end{lemma}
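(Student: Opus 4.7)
The plan is to argue by contradiction using Lemma~\ref{lemma: reducible}: if some vertex $v$ has every incident edge labeled $0$, then the trivial induced subgraph $H = G[\{v\}]$ is reducible, which is impossible.

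Suppose the statement fails, so there is a vertex $v$ with $\pi(uv) = 0$ for every $u \in N(v)$. Set $H = G[\{v\}]$, so $V(H) = \{v\}$ and $N_G(H) = N(v)$. Since $G$ is $3$-critical and $H$ is a nonempty induced subgraph, $G - H = G - v$ is a proper subgraph of $G$; by the criticality, $diam(\mathcal{I}(G-v)) \le 3$, and hence by Proposition~\ref{prop: diam vector} there exists a $3$-dim vector assignment $f$ of $G-v$ with label $\pi|_{G-v}$.

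To exhibit reducibility, I would take the most rigid available boundary family: define $\mathcal{B}_f(u) = \{f(u)\}$ for every $u \in N(v)$. Since $|\mathcal{B}_f(u)| = 1$ for every $u \in N_G(H)$, the set $\{u \in N_G(H) : |\mathcal{B}_f(u)| \ge 2\}$ is empty and hence trivially independent, so $(\mathcal{B}_f(u))_{u \in N(v)}$ is an available boundary family. Now define $g$ on $G[V(H) \cup N_G(H)] = G[\{v\} \cup N(v)]$ by $g(u) = f(u)$ for $u \in N(v)$ and $g(v) = \mathbf{0}$. On edges inside $N(v)$, the label constraint holds because $f$ already satisfies it. On each edge $uv$ with $u \in N(v)$, we have $g(u) \cdot g(v) = f(u) \cdot \mathbf{0} = 0 = \pi(uv)$ by the standing assumption on $v$. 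Thus $g$ is a valid $3$-dim vector assignment on $G[\{v\} \cup N(v)]$ with $\pi|_{G[\{v\} \cup N(v)]}$, and $g(u) = f(u) \in \mathcal{B}_f(u)$ for all $u \in N(v)$.

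Therefore $H$ is reducible, contradicting Lemma~\ref{lemma: reducible}. There is no real obstacle here: the only thing to check carefully is that the trivial singleton boundary family is genuinely an available boundary family, and that the zero vector is an admissible choice for $g(v)$ under the vector-assignment definition; both are immediate from the formal definitions.
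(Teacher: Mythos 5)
Your proof is correct and rests on the same key idea as the paper's: extend a $3$-dim vector assignment of $G-v$ by assigning $\mathbf{0}$ to $v$, which is compatible because every edge incident to $v$ is labeled $0$. The paper does this directly (defining the full assignment on $G$ and contradicting the badness of $\pi$) rather than routing through the reducibility machinery of Lemma~\ref{lemma: reducible}, but the two arguments are essentially identical.
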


\begin{proof}
    Suppose there exists a vertex $v \in V(G)$ such that $\pi(uv) = 0$ for all $u \in N_G(v)$.
    Let $G' = G - v$. Then  $G'$ admits a $3$-dim vector assignment $f$ with  $\pi|_{G'}$.
    Let $f(v) = \mathbf{0} \in \mathbb{F}_2^3$. Then it is not difficult to  verify that $f$ is a $3$-dim vector assignment of $G$ with  $\pi$, a contradiction.
\end{proof}

\begin{lemma}\label{lemma: nonzero, degree <= 2}
    Let $G$ be a graph respecting a labelling $\pi$.
    If $G$ admits a $3$-dim vector assignment with $\pi$, then there exists a $3$-dim vector assignment $f$ with $\pi$ such that $f(v) \neq \mathbf{0}$ for every vertex $v \in V(G)$ of degree at most $2$.
\end{lemma}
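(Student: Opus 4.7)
The plan is to start with any $3$-dim vector assignment $g$ of $G$ with label $\pi$ (which exists by hypothesis) and iteratively modify it so that no vertex of degree at most $2$ ends up assigned the zero vector. Let $S = \{v \in V(G) : d(v) \leq 2\}$ and set $f := g$; I will process the vertices of $S$ in an arbitrary order.

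Suppose I am processing $v \in S$ with current value $f(v) = \mathbf{0}$ (otherwise I simply skip $v$). Because $f$ satisfies the label $\pi$, we have $\pi(uv) = f(u) \cdot f(v) = 0$ for every $u \in N(v)$. I will replace $f(v)$ by a non-zero vector $\mathbf{v}'$ chosen from the linear subspace $\mathbf{W} := \mathcal{L}(\{f(u) : u \in N(v)\})^{\perp}$ of $\mathbb{F}_2^3$. Such a choice is possible because $\dim(\mathbf{W}) \geq 3 - |N(v)| \geq 1$, so $\mathbf{W}$ contains a non-zero element.

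After the substitution $f(v) \leftarrow \mathbf{v}'$, the function $f$ remains a valid vector assignment with label $\pi$: edge constraints not incident to $v$ are untouched, while for every edge $uv \in E(G)$ one has $\mathbf{v}' \cdot f(u) = 0 = \pi(uv)$ by the choice of $\mathbf{v}'$. Crucially, the procedure only ever turns zero entries into non-zero entries, so previously non-zero vertices stay non-zero; iterating over all of $S$ therefore yields the required assignment in finitely many steps.

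There is essentially no obstacle here: the argument is a one-line dimension count in $\mathbb{F}_2^3$, and the bound $d(v) \le 2$ is exactly what makes the orthogonal complement non-trivial. The only point requiring a little care is to observe that when I process $v$, the relation $\pi(uv) = 0$ must be read with respect to the current (possibly already updated) value of $f(u)$; this causes no trouble, because the invariant ``$f$ is a valid assignment with label $\pi$'' is maintained throughout, and the replacement at $v$ is decided locally from the current values of its neighbours.
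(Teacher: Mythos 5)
Your proposal is correct and is essentially the paper's argument: the key point in both is that a degree-$\le 2$ vertex currently assigned $\mathbf{0}$ imposes a homogeneous system on its replacement (since $\pi(uv)=f(u)\cdot\mathbf{0}=0$), whose solution space in $\mathbb{F}_2^3$ has dimension at least $1$ and hence contains a nonzero vector. The paper merely packages the same local fix as an extremal argument (take the assignment minimizing the number of offending vertices and derive a contradiction) instead of your explicit iteration; the two are interchangeable.
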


\begin{proof}
    Let $f$ be the $3$-dim vector assignment of $G$ with $\pi$ which minimizes $n_f = |\{ v \in V(G) \mid f(v) = \mathbf{0}, d_G(v) \le 2\}|$.
    Suppose otherwise $n_f > 0$. Let $w\in \{ v \in V(G) \mid f(v) = \mathbf{0}, d_G(v) \le 2\}$ 
    and $\mathcal{F}(w) =  \{ \mathbf{w } \in \mathbb{F}_2^3 \mid \mathbf{w \cdot} f(u) = \pi(uw), \forall uw \in E(G) \}$. Then $|\mathcal{F}(w)| \ge 2$ since $d_G(w) \le 2$.
    Choose $\mathbf{w }\in \mathcal{F}(w)-\{\mathbf{ 0}\}$ and define a function $g: V(G)\rightarrow \mathbb{F}_2^3$ by letting $g(v) = f(v)$ for every $v \in V(G) - \{w\}$ and $g(w) = \mathbf{w}$. It is easy to verify that $g$ is a $3$-dim vector assignment of $G$ with $\pi$, but $n_g < n_f$, a contradiction.
\end{proof}

\begin{lemma}\label{lemma: reducible 3-regular}
Let $G$ be a $4$-diameter-critical graph of maximum degree $3$ respecting a bad labelling $\pi$.
Then $G$ is 3-regular.
\end{lemma}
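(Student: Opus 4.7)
The strategy is to argue by contradiction via Lemma~\ref{lemma: reducible}: if some $v \in V(G)$ has $d_G(v) \le 2$, I will exhibit that $H = \{v\}$ itself is a reducible induced subgraph. The common setup is to fix a $3$-dim vector assignment $f$ on $G - v$ (which exists by $3$-criticality) chosen via Lemma~\ref{lemma: nonzero, degree <= 2} so that $f(u) \neq \mathbf{0}$ for every vertex $u$ of degree at most $2$ in $G - v$; in particular, $f(u) \neq \mathbf{0}$ for each $u \in N_G(v)$, since any such $u$ satisfies $d_{G-v}(u) \le d_G(u) - 1 \le 2$.

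If $d(v) = 0$, then $N_G(v) = \emptyset$ and setting $g(v) = \mathbf{0}$ trivially shows $\{v\}$ is reducible. If $d(v) = 1$ with unique neighbor $u$, Lemma~\ref{lemma: reducible all zero adjacent edges} forces $\pi(uv) = 1$; since $f(u) \neq \mathbf{0}$ we can pick $g(v) \in \mathbb{F}_2^3$ with $g(v) \cdot f(u) = 1$. Setting $\mathcal{B}(u) = \{f(u)\}$ then shows $\{v\}$ is reducible.

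The main case is $d(v) = 2$ with neighbors $u_1, u_2$; by Lemma~\ref{lemma: reducible all zero adjacent edges}, we may assume $\pi(vu_1) = 1$. Since $d_{G-v}(u_i) \le 2$, each $\mathcal{A}_f(u_i)$ is a positive-dimensional affine subspace of $\mathbb{F}_2^3$, so $|\mathcal{A}_f(u_i)| \ge 2$. I define $\mathcal{B}(u_1) = \{f(u_1)\}$ and $\mathcal{B}(u_2) = \mathcal{A}_f(u_2)$. This is an available boundary family regardless of whether $u_1 u_2 \in E(G)$, because only $u_2$ has boundary set of size at least $2$, so the ``independent'' condition is trivially met. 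Furthermore, when $u_1 u_2 \in E(G)$ the edge constraint $g(u_1) \cdot g(u_2) = \pi(u_1 u_2)$ is automatic, since $u_1 \in N_{G-v}(u_2)$ forces $\mathbf{w} \cdot f(u_1) = \pi(u_1 u_2)$ for every $\mathbf{w} \in \mathcal{A}_f(u_2)$. So reducibility reduces to finding $\mathbf{w}_v \in \mathbb{F}_2^3$ and $g(u_2) \in \mathcal{A}_f(u_2)$ with $\mathbf{w}_v \cdot f(u_1) = 1$ and $\mathbf{w}_v \cdot g(u_2) = \pi(vu_2)$.

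The set $\mathcal{W} := \{\mathbf{w} \in \mathbb{F}_2^3 : \mathbf{w} \cdot f(u_1) = 1\}$ is an affine hyperplane of size $4$, and the construction fails iff every $\mathbf{w}_v \in \mathcal{W}$ satisfies $\mathbf{w}_v \perp S_2$ and $\mathbf{w}_v \cdot f(u_2) = 1 - \pi(vu_2)$, where $S_2 = \mathcal{A}_f(u_2) - f(u_2)$ is the direction space (otherwise the linear form $g(u_2) \mapsto \mathbf{w}_v \cdot g(u_2)$ is either surjective onto $\{0,1\}$ or constant and equal to the required value). A short linear-algebra argument in $\mathbb{F}_2^3$, using that the differences of elements of $\mathcal{W}$ fill $\{f(u_1)\}^\perp$, forces $S_2 = \mathcal{L}(f(u_1))$, $f(u_2) = f(u_1)$, and $\pi(vu_2) = 0$. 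Consequently $\mathcal{A}_f(u_2) = \{f(u_1), \mathbf{0}\}$, so $\mathbf{0} \in \mathcal{A}_f(u_2)$ forces every edge from $u_2$ within $G - v$ to be labeled $0$; combined with $\pi(vu_2) = 0$, this means every edge at $u_2$ has label $0$, contradicting Lemma~\ref{lemma: reducible all zero adjacent edges}. Hence the construction succeeds and $\{v\}$ is reducible, the desired contradiction. The main obstacle is this final failure analysis: isolating the unique bad configuration in $\mathbb{F}_2^3$ and observing that it collapses exactly to the degenerate ``all-zero'' situation ruled out by Lemma~\ref{lemma: reducible all zero adjacent edges}.
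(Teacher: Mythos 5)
Your proof is correct and takes essentially the same route as the paper's: for a vertex $v$ of degree at most $2$ you set $H=\{v\}$, fix $f$ on $G-v$ via Lemma~\ref{lemma: nonzero, degree <= 2}, choose $\mathcal{B}(u_1)=\{f(u_1)\}$ and $\mathcal{B}(u_2)=\mathcal{A}_f(u_2)$, and show $H$ is reducible, contradicting Lemma~\ref{lemma: reducible}. The only difference is in the last step, where you characterize and then exclude the unique failing configuration via Lemma~\ref{lemma: reducible all zero adjacent edges}, whereas the paper directly exhibits a workable $g(u_2)$ by avoiding $\mathbf{0}$ or $f(u_1)$ according to the value of $\pi(vu_2)$; both verifications are valid.
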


\begin{proof} 
    Suppose there exists a vertex $v \in V(G)$ such that $d(v)=1$. Let $uv\in E(G)$.
    Then by Lemma~\ref{lemma: reducible all zero adjacent edges}, $\pi(uv) = 1$.
    Let $V(H) = \{v\}$. Then $N_G(H) = \{u\}$. By hypothesis, $G-v$ admits a $3$-dim vector assignment $f$ with $\pi|_{G-v}$.
    Since $d_{G-v}(u)\le 2$,  $|\mathcal{A}_{H,f}(u)| \ge 2$.
    Let $\mathcal{B}_f(u) = \mathcal{A}_{H,f}(u)$. Then ${(\mathcal{B}_f(u))}_{u \in N_G(H)}$ is an available boundary family. Let  $g(u)\in \mathcal{B}(u)-\{\mathbf{0 }\}$. We can choose $g(v)\in \mathbb{F}_2^3$ such that $g(v)\cdot g(u)=1$. Then
     $H$ is reducible, a contradiction with Lemma~\ref{lemma: reducible}.

    Suppose there exists a vertex $v \in V(G)$ such that $d(v)=2$. Let $N_H(v)=\{u_1,u_2\}$.
    By Lemma~\ref{lemma: reducible all zero adjacent edges}, without loss of generality, assume $\pi(vu_1) = 1$.
    Let $V(H) = \{v\}$. Then $N_G(H) = \{u_1,u_2\}$. By hypothesis and Lemma~\ref{lemma: nonzero, degree <= 2}, $G-v$ admits a $3$-dim vector assignment $f$  with $\pi|_{G-v}$ such that $f(u_1), f(u_2) \neq \mathbf{0}$.
    Let $\mathcal{B}(u_1) = \{f(u_1)\}$ and $\mathcal{B}(u_2) =   \mathcal{A}_{H,f}(u_2)$. Then ${(\mathcal{B}(u_i))}_{i=1,2}$ is an available boundary family. 
    Since $d_{G-v}(u_2) \le 2$, we have $|\mathcal{B}(u_2)| \ge 2$. Let $g(u_1)=f(u_1)$. 
    If $\pi(vu_2) = 1$ (resp. $\pi(vu_2) = 0$), choose $g(u_2) \in \mathcal{B}(u_2)-\{\mathbf{ 0}\} $ (resp.  $g(u_2) \in \mathcal{B}(u_2)-\{f(u_1)\} $). 
    It is easy to verify in either case that there exists $g(v) \in \mathbb{F}_2^3$ such that $g(v) \cdot g(u_i) = \pi(vu_i)$ for $i=1,2$, so $H$ is reducible, a contradiction with Lemma~\ref{lemma: reducible}.
\end{proof}

\begin{figure}[t]
	\begin{minipage}{0.49\linewidth}
		\vspace{3pt}
		\centerline{\includegraphics[width=0.7\textwidth]{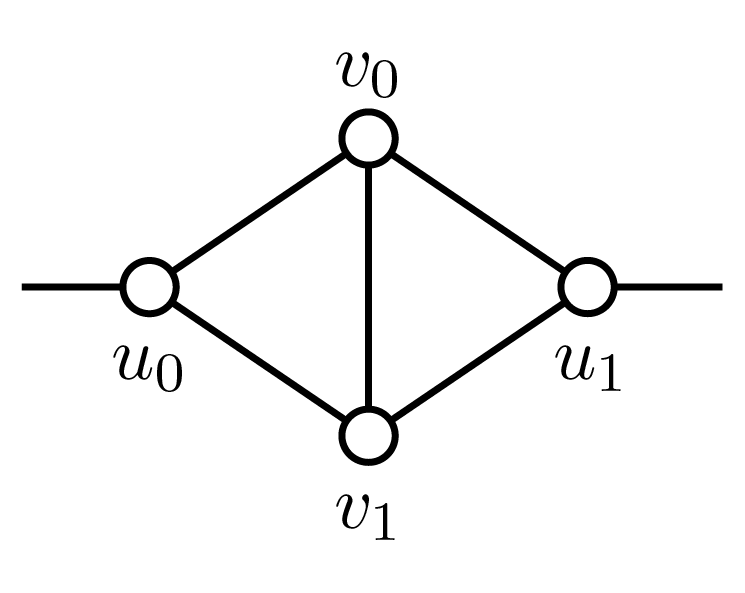}}
		\caption{$K_4^-$ in $G$.}\label{fig: K4-}
	\end{minipage}
	\begin{minipage}{0.49\linewidth}
		\vspace{3pt}
		\centerline{\includegraphics[width=0.7\textwidth]{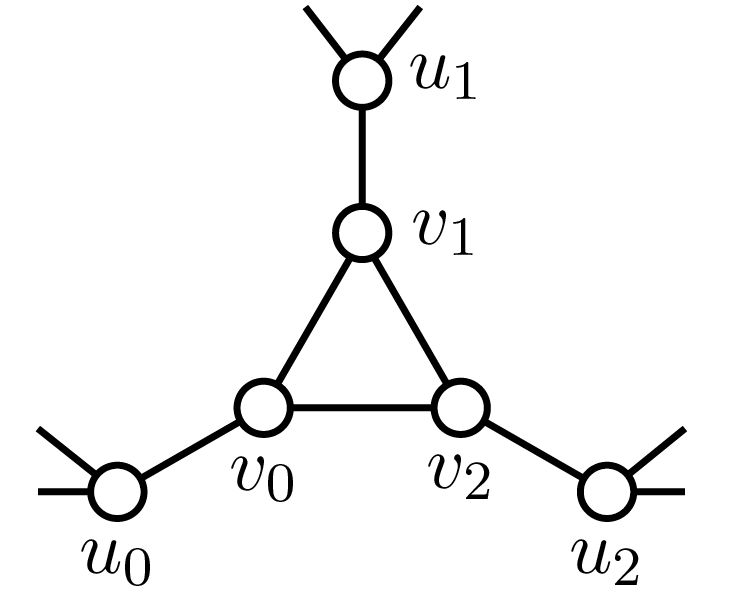}}
		\caption{Triangle in $G$.}\label{fig: C3}
	\end{minipage}
\end{figure}

\begin{lemma}\label{lemma: reducible K4-}
Let $G$ be a $4$-diameter-critical 3-regular graph respecting a bad labelling $\pi$.
There is no induced $K_4^-$ in $G$, where $K_4^-$ is the graph obtained by deleting an edge in $K_4$.
\end{lemma}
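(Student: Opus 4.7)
The plan is to contradict Lemma~\ref{lemma: reducible} by showing that the induced $K_4^-$ is reducible. Label its vertices so that $a$ and $d$ are the non-adjacent pair and $b, c$ are the two vertices of degree $3$ in $K_4^-$; the edges are $ab, ac, bc, bd, cd$. Since $G$ is $3$-regular by Lemma~\ref{lemma: reducible 3-regular} and $K_4^-$ is induced, $b$ and $c$ have no neighbor outside $\{a,b,c,d\}$, while $a$ and $d$ each have exactly one such external neighbor, say $a'$ and $d'$, with $a', d' \notin \{a,b,c,d\}$.

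Set $V(H) = \{b, c\}$, so $N_G(H) = \{a, d\}$ and $\{a, d\}$ is independent in $G - H$. By $3$-criticality, $G - H$ admits a $3$-dim vector assignment $f$ with label $\pi|_{G-H}$, and by Lemma~\ref{lemma: nonzero, degree <= 2} we may take $f(a), f(d) \ne \mathbf{0}$. Each of $\mathcal{A}_f(a), \mathcal{A}_f(d)$ is an affine $2$-subspace of $\mathbb{F}_2^3$ of size $4$ (or all of $\mathbb{F}_2^3$ if $f(a') = \mathbf{0}$ or $f(d') = \mathbf{0}$, a strictly easier case that I subsume into the generic analysis below). Setting $\mathcal{B}_f(a) := \mathcal{A}_f(a)$, $\mathcal{B}_f(d) := \mathcal{A}_f(d)$ yields an available boundary family, and to exhibit reducibility it suffices to find $\mathbf{a} \in \mathcal{A}_f(a), \mathbf{d} \in \mathcal{A}_f(d), \mathbf{b}, \mathbf{c} \in \mathbb{F}_2^3$ satisfying the five inner-product equations imposed by the edges of $K_4^-$.

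Write $A := \mathcal{A}_f(a) \setminus \{\mathbf{0}, \mathbf{1}\}$ and $D := \mathcal{A}_f(d) \setminus \{\mathbf{0}, \mathbf{1}\}$. In the \emph{generic case}, some pair $(\mathbf{a}, \mathbf{d}) \in A \times D$ satisfies $\mathbf{a} \ne \mathbf{d}$ and $\mathbf{a} + \mathbf{d} \ne \mathbf{1}$, i.e.\ $\mathbf{1} \notin \mathcal{L}(\mathbf{a}, \mathbf{d})$. Then $\mathbf{a}, \mathbf{d}$ are linearly independent, and the unique nonzero $\mathbf{e}$ with $\mathbf{e} \perp \mathbf{a}, \mathbf{d}$ is odd, so $\mathbf{e} \cdot \mathbf{e} = 1$. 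The two-element solution sets for $\mathbf{b}$ and for $\mathbf{c}$ yield four candidate values of $\mathbf{b} \cdot \mathbf{c}$, namely $\mathbf{b}_0 \cdot \mathbf{c}_0 + s(\mathbf{c}_0 \cdot \mathbf{e}) + t(\mathbf{b}_0 \cdot \mathbf{e}) + st$ for $s, t \in \mathbb{F}_2$, and a short direct check (using $\mathbf{e} \cdot \mathbf{e} = 1$) confirms that they always cover both $0$ and $1$; so we can pick $\mathbf{b}, \mathbf{c}$ realizing $\pi(bc)$.

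The main obstacle is the \emph{degenerate case} where no such pair exists. A count shows this forces $|A| = |D| = 2$; since $A, D$ are then both cosets of $\{\mathbf{0}, \mathbf{1}\}$ in $\mathbb{F}_2^3$, they are either equal or disjoint, and the disjoint subcase still produces a good pair, so we must have $A = D$, equivalently $f(a') = f(d')$ is a nonzero even vector and $\pi(aa') = \pi(dd') = 0$. I would dispatch this as follows. If $\pi(ab) = \pi(bd)$ and $\pi(ac) = \pi(cd)$, take $\mathbf{a} = \mathbf{d} = \mathbf{a}_1 \in A$; since $\mathbf{a}_1 \notin \{\mathbf{0}, \mathbf{1}\}$, $\mathbf{a}_1^{\perp}$ contains an odd vector, and the values of $\mathbf{b} \cdot \mathbf{c}$ as $\mathbf{b}, \mathbf{c}$ range over their affine cosets of $\mathbf{a}_1^{\perp}$ cover $\mathbb{F}_2$. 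Otherwise, WLOG $\pi(ab) \ne \pi(bd)$; let $\mathbf{e}$ be the unique even vector in $A$, and observe that for the two pairings $(\mathbf{a}, \mathbf{d}) \in \{(\mathbf{e}, \mathbf{e}+\mathbf{1}),\, (\mathbf{e}+\mathbf{1}, \mathbf{e})\}$ the value $\mathbf{b}_0 \cdot \mathbf{e}$ equals $\pi(ab)$ and $\pi(bd)$ respectively; picking the pairing giving $\mathbf{b}_0 \cdot \mathbf{e} = 1$, the four values of $\mathbf{b} \cdot \mathbf{c}$ again hit both $0$ and $1$. The case $\pi(ac) \ne \pi(cd)$ is analogous via $\mathbf{c}_0 \cdot \mathbf{e}$. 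In every case we obtain the extension $g$ witnessing reducibility of $H$, contradicting Lemma~\ref{lemma: reducible}. The key obstacle is precisely this degenerate subcase and the observation that the even element of $A$ plays the role of $\mathbf{e}$, so that $\mathbf{b}_0 \cdot \mathbf{e}$ coincides with a label value.
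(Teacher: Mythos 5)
Your reduction is set up exactly as in the paper: you take $H$ to be the two degree-three vertices of the induced $K_4^-$ (the paper's $v_0,v_1$, your $b,c$), let $N_G(H)$ be the non-adjacent pair, use Lemma~\ref{lemma: nonzero, degree <= 2} to normalize $f$ on the boundary, and set $\mathcal{B}_f=\mathcal{A}_f$ there, so that the whole lemma rests on verifying that $H$ is reducible. Where you genuinely diverge is in that last verification: the paper delegates it to a computer enumeration over all boundary families consistent with its listed properties, whereas you give an explicit linear-algebra argument, and I checked it goes through. In the generic case $\mathbf{a},\mathbf{d}$ are independent with $\mathbf{1}\notin\mathcal{L}(\mathbf{a},\mathbf{d})$, so the common direction vector $\mathbf{e}$ of the two solution lines is odd, $\mathbf{e}\cdot\mathbf{e}=1$, and the quadratic term $st$ guarantees $\mathbf{b}\cdot\mathbf{c}$ is non-constant; your identification of the degenerate case ($A=D$ a coset of $\{\mathbf{0},\mathbf{1}\}$, forcing $f(a')=f(d')$ even and $\pi(aa')=\pi(dd')=0$) is the correct exhaustive residue, and there the split on whether $\pi(ab)=\pi(bd)$ and $\pi(ac)=\pi(cd)$ works because in sub-case (ii) the direction vector of the solution line is precisely the even vector $\mathbf{e}\in A$, so $\mathbf{b}_0\cdot\mathbf{e}$ literally equals one of the two unequal labels and can be forced to $1$. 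Two remarks: your argument never invokes the paper's properties coming from Lemma~\ref{lemma: reducible all zero adjacent edges} (its properties 2 and 3), so you are in fact proving reducibility for every label and every valid $f$ with $f(a),f(d)\neq\mathbf{0}$ — a slightly stronger statement that the paper does not need; and this buys a fully human-checkable proof of one of the computer-assisted steps, at the cost of a longer case analysis. The proof is correct.
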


\begin{proof}
    Suppose there exists a $K_4^-$ in $G$ with vertex set $\{v_0,v_1,u_0,u_1\}$ and $u_0u_1 \notin E(G)$~(see Figure~\ref{fig: K4-}).

    Let $H = G[\{v_0,v_1\}]$. Then $N_G(H) = \{u_0, u_1\}$. By hypothesis and Lemma~\ref{lemma: nonzero, degree <= 2}, $G-H$ admits a $3$-dim vector assignment $f$  with $\pi|_{G-H}$ such that $f(u_0), f(u_1) \neq \mathbf{0}$.
    Let $\mathcal{B}(u_i) = \mathcal{A}_{H,f}(u_i)$ for $i=0,1$. Then ${(\mathcal{B}(u_i))}_{i=0,1}$ is an available boundary family.
    We have the following properties:
    \begin{enumerate}
        \item For each $i\in \{0,1\}$, $|\mathcal{B}(u_i) | \ge 4$ as $d_{G-H}(u_i) = 1$.
        \item For each $i\in \{0,1\}$, if $\pi(v_0u_i) = \pi(v_1u_i) = 0$, then $\mathbf{0} \notin \mathcal{B}(u_i)$ by Lemma~\ref{lemma: reducible all zero adjacent edges}.
        \item For each $i\in \{0,1\}$, at least one edge in $\{v_0v_{1}, v_i u_0, v_i u_1\}$ is labelled one by Lemma~\ref{lemma: reducible all zero adjacent edges}.
    \end{enumerate}

    With the above properties, we claim that $H$ is reducible.
    The claim is proved by using a computer to enumerate all available boundary families with above properties.
    The source codes can be found on \href{https://github.com/handsome12138/InversionDiameter}{GitHub}.
    From this, we derive a contradiction with Lemma~\ref{lemma: reducible}.
\end{proof}

\begin{lemma}\label{lemma: reducible C3}
Let $G$ be a $4$-diameter-critical 3-regular graph respecting a bad labelling $\pi$.
Then there is no triangle in $G$.
\end{lemma}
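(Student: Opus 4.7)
The plan is to replicate the strategy of Lemma~\ref{lemma: reducible K4-} with a larger reducible configuration. Suppose for contradiction that $G$ contains a triangle on $\{v_0, v_1, v_2\}$. Since $G$ is $3$-regular and has no induced $K_4^-$ by Lemma~\ref{lemma: reducible K4-}, each $v_i$ has exactly one neighbor $u_i \in V(G) - \{v_0,v_1,v_2\}$, and $u_0, u_1, u_2$ must be pairwise distinct: otherwise, say $u_0 = u_1$, the set $\{v_0, v_1, v_2, u_0\}$ would induce a $K_4^-$ in $G$. Let $H = G[\{v_0, v_1, v_2\}]$, so $N_G(H) = \{u_0, u_1, u_2\}$ and $d_{G-H}(u_i) = 2$ for each $i$. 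By the $3$-criticality of $G$ and Lemma~\ref{lemma: nonzero, degree <= 2}, the graph $G - H$ admits a $3$-dim vector assignment $f$ with label $\pi|_{G-H}$ such that $f(u_i) \neq \mathbf{0}$ for every $i \in \{0,1,2\}$.

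Since the $u_i$'s may be pairwise adjacent in $G$, I pick a maximal subset $I \subseteq \{0, 1, 2\}$ for which $\{u_i : i \in I\}$ is independent in $G - H$, and define $\mathcal{B}(u_i) = \mathcal{A}_f(u_i)$ for $i \in I$ and $\mathcal{B}(u_i) = \{f(u_i)\}$ otherwise. The constraint $d_{G-H}(u_i) = 2$ ensures that $|\mathcal{A}_f(u_i)|$ is a power of $2$ at least $2$, so $|\mathcal{B}(u_i)| \ge 2$ for $i \in I$, and $(\mathcal{B}(u_i))_{i=0,1,2}$ is an available boundary family. Two labelling properties then carry over from earlier lemmas: (a) if $\mathbf{0} \in \mathcal{B}(u_i)$ then every edge incident to $u_i$ in $G - H$ is labeled $0$, and Lemma~\ref{lemma: reducible all zero adjacent edges} applied to $u_i$ forces $\pi(v_i u_i) = 1$; and (b) by the same lemma applied to $v_i$, at least one edge in $\{v_i v_{i+1}, v_i v_{i+2}, v_i u_i\}$ (indices taken mod $3$) is labeled $1$.

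The remaining step is to show that, for every adjacency pattern on $\{u_0, u_1, u_2\}$ in $G$ (independent set, single edge, path on three vertices, or triangle), every label on the edges of $G[V(H) \cup N_G(H)]$ compatible with (b), and every available boundary family $(\mathcal{B}(u_i))$ satisfying (a), there exist vectors $g(v_0), g(v_1), g(v_2) \in \mathbb{F}_2^3$ and $g(u_i) \in \mathcal{B}(u_i)$ such that $g$ is a $3$-dim vector assignment of $G[V(H) \cup N_G(H)]$ with the induced label. This shows that $H$ is reducible, contradicting Lemma~\ref{lemma: reducible}. The verification is a finite enumeration which we delegate to the same computer procedure used for Lemma~\ref{lemma: reducible K4-}, extended in the obvious way.

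The principal obstacle is controlling the combinatorial blow-up of the enumeration: here there are three outside vertices instead of two, and the possible edges among them add both labels and constraints that $g$ must simultaneously respect. Once the possibilities have been pruned using (a), (b), and the chosen adjacency pattern among the $u_i$'s, however, the remaining case check is finite and handled mechanically, in complete parallel to the $K_4^-$ analysis of Lemma~\ref{lemma: reducible K4-}.
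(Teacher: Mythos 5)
Your overall strategy matches the paper's (take $H$ to be the triangle, build an available boundary family on $\{u_0,u_1,u_2\}$, and reduce), but there are two concrete gaps. First, your argument that $u_0,u_1,u_2$ are pairwise distinct does not cover the case $u_0=u_1=u_2$: if all three coincide in a single vertex $u$, then $\{v_0,v_1,v_2,u\}$ induces $K_4$, not $K_4^-$, so Lemma~\ref{lemma: reducible K4-} does not apply. This case must be excluded separately; since $G$ is $3$-regular and connected it forces $G=K_4$, and one then uses $diam(\mathcal{I}(K_4))=3$ to contradict $3$-criticality. Your ``otherwise, say $u_0=u_1$'' argument only rules out the situation where exactly two of the $u_i$ coincide.

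Second, the reducibility check you delegate to the computer is set up with strictly weaker hypotheses than the paper's, and there is real evidence it would not close as stated. The paper does not expand all boundary sets over a maximal independent subset of $\{u_0,u_1,u_2\}$; it fixes $\mathcal{B}(u_1)=\{f(u_1)\}$ and $\mathcal{B}(u_2)=\{f(u_2)\}$ as singletons and expands only $\mathcal{B}(u_0)=\mathcal{A}(u_0)$, and--crucially--it first normalizes the indexing so that if $f(u_1)=f(u_2)$ then also $f(u_0)=f(u_1)$, which feeds the extra property ``$f(u_1)=f(u_2)\Rightarrow f(u_1)\in\mathcal{B}(u_0)$'' into the enumeration. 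This normalization is exactly the kind of constraint one adds because the enumeration fails without it: an abstract family in which the two singleton sets carry the same nonzero vector while the expanded set avoids it need not admit a valid extension $g$. Your properties (a) and (b) do not capture this, so ``the same computer procedure extended in the obvious way'' may simply report a non-reducible configuration. A correct write-up must either include this normalization (or an equivalent realizability constraint tying the $f(u_i)$ together) or actually exhibit a successful enumeration; as written, the essential step is asserted rather than established.
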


\begin{proof}
    Suppose  there exists a triangle with vertices $\{v_0,v_1,v_2\}$ and, for $i=0,1,2$, let $u_i$ be the neighbor of $v_i$~(see Figure~\ref{fig: C3}). By Lemma~\ref{lemma: reducible K4-}, $u_0,u_1,u_2$ are either distinct vertices, or $u_0=u_1=u_2$.
    If $u_0=u_1=u_2$, then $G=K_4$ by $G$ being 3-regular.
    However, it was shown in~\cite{havet2024diameter} that $\diam(\mathcal{I}(K_4)) = 3$, which contradicts the fact that $G$ is $4$-diameter-critical.
    Hence, we conclude that $u_0,u_1,u_2$ are distinct vertices.
    Let $V(H)=\{v_0,v_1,v_2\}$. Then $N_G(H) = \{u_0, u_1,u_2\}$. By hypothesis and Lemma~\ref{lemma: nonzero, degree <= 2}, $G-H$ admits a $3$-dim vector assignment $f$ with $\pi|_{G-H}$ such that
     $f(u_i) \neq \mathbf{0}, i=0,1,2$. 
     By relabelling as necessary, we can assume that $u_0$ satisfies the property: if $f(u_1) = f(u_2)$, then  $f(u_0) =f(u_1) = f(u_2)$.
    Let $\mathcal{B}(u_0) = \mathcal{A}_{H,f}(u_0)$ and $\mathcal{B}(u_i) = \{f(u_i)\}, i= 1,2$.
    Now we have the following properties:
    \begin{enumerate}
        \item $|\mathcal{B}(u_0)| \ge 2$ as $d_{G-H}(u_0) = 2$.
        \item For each $i=0,1,2$, at least one edge adjacent to $v_i$ is labelled one by Lemma~\ref{lemma: reducible all zero adjacent edges}.
        \item If $\pi(u_0v_0) = 0$, then $\mathbf{0} \notin \mathcal{B}(u_0)$, also by Lemma~\ref{lemma: reducible all zero adjacent edges}.
        \item If $f(u_1) = f(u_2)$, then $f(u_1) = f(u_0) \in \mathcal{B}(u_0)$.
    \end{enumerate}

    With the above properties, we claim that $H$ is reducible, which is again proved with the help of a computer.
    The source code can be found on \href{https://github.com/handsome12138/InversionDiameter}{GitHub}.
    Therefore, we derive a contradiction with Lemma~\ref{lemma: reducible}.
\end{proof}

\begin{figure}[t]
	\begin{minipage}{0.49\linewidth}
		\vspace{3pt}
		\centerline{\includegraphics[width=0.7\textwidth]{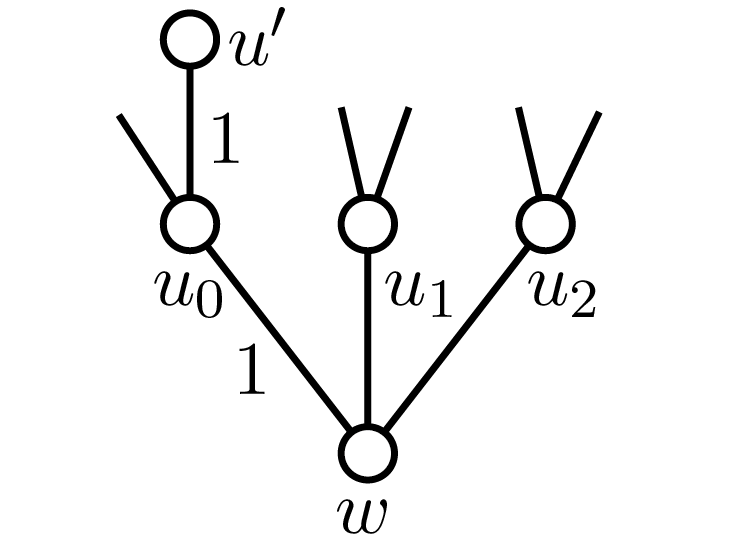}}
		\caption{$P_3$ with edges labelled one in $G$.}\label{fig: P3}
	\end{minipage}
	\begin{minipage}{0.49\linewidth}
		\vspace{3pt}
		\centerline{\includegraphics[width=0.7\textwidth]{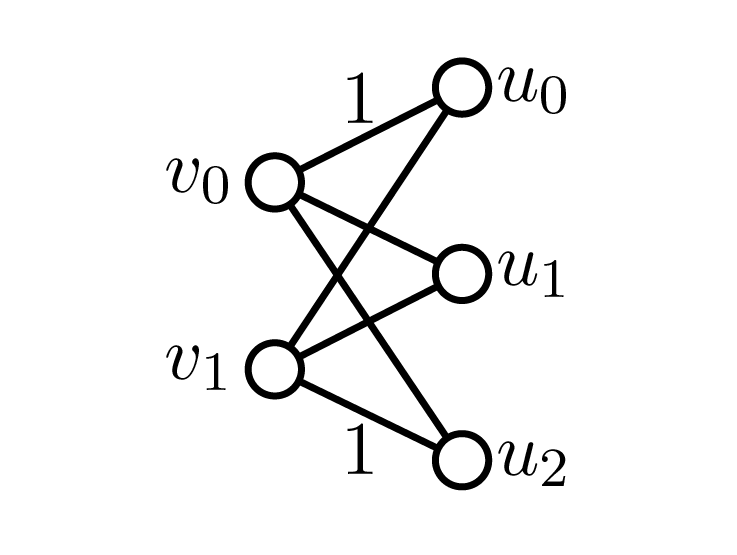}}
		\caption{$K_{2,3}$ in $G$.}\label{fig: K23}
	\end{minipage}
\end{figure}

\begin{lemma}\label{lemma: reducible P3 labelled 1}
Let $G$ be a $4$-diameter-critical 3-regular graph respecting a bad labelling $\pi$.
Then there is no $P_3$ with two edges labelled one in $G$.
\end{lemma}

\begin{proof}
    Suppose  there exists a path $wu_0u'$ such that $\pi(wu_0) = \pi(u_0u') = 1$.
    By Lemma~\ref{lemma: reducible C3}, $u'w\notin E(G)$. 
    Let $u_1, u_2$ be the neighbors of $w$~(see Figure~\ref{fig: P3}).
    By Lemma~\ref{lemma: reducible C3}, $\{u_0,u_1,u_2\}$ is an independent set.
     Let $V(H)=\{w\}$. Then $N_G(H) = \{u_0, u_1,u_2\}$. By hypothesis, $G-H$ admits a $3$-dim vector assignment $f$ with $\pi|_{G-H}$.
     Let $\mathcal{B}(u_i) = \mathcal{A}_{H,f}(u_i), i=0,1,2$. Then  ${(\mathcal{B}(u_i))}_{i=0,1,2}$ is an available boundary family.
     We have the following properties:
     \begin{enumerate}
        \item For each $i\in \{0,1,2\}$, $|\mathcal{B}(u_i)| \ge 2$ as $d_{G-H}(u_i) = 2$.
         \item $\mathbf{0} \notin \mathcal{B}(u_0)$, because $\pi(u_0u') = 1$.
         \item For each $i =1,2$, if $\pi(wu_i) = 0$, then $\mathbf{0} \notin \mathcal{B}(u_i)$ by Lemma~\ref{lemma: reducible all zero adjacent edges}.
     \end{enumerate}
     With the above properties, we claim that $H$ is reducible which is checked using a computer~(\href{https://github.com/handsome12138/InversionDiameter}{GitHub}).
     From this, we derive a contradiction with Lemma~\ref{lemma: reducible}.
\end{proof}

\begin{lemma}\label{lemma: reducible K23}
Let $G$ be a $4$-diameter-critical 3-regular graph respecting a bad labelling $\pi$.
Then there is no $K_{2,3}$ in $G$.
\end{lemma}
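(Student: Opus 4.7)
The plan is to assume $G$ contains an induced $K_{2,3}$ with bipartition $\{a,b\}\cup\{c_1,c_2,c_3\}$ and derive a contradiction via the reducibility framework of Lemma~\ref{lemma: reducible}, with one extremal case handled by a direct factorization argument. First, Lemma~\ref{lemma: reducible C3} (no triangles) forces $a \not\sim b$ and $c_i \not\sim c_j$, so the induced subgraph $H := G[\{a,b,c_1,c_2,c_3\}]$ is precisely $K_{2,3}$. Since $G$ is $3$-regular, $a$ and $b$ already have all three neighbors inside $H$, and each $c_i$ has exactly one outside neighbor $w_i$; hence $N_G(H) \subseteq \{w_1,w_2,w_3\}$.

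I would then split the argument by $|\{w_1,w_2,w_3\}|$. If $w_1=w_2=w_3=w$, then $w$'s three neighbors are all in $H$, so $V(G)=\{a,b,w,c_1,c_2,c_3\}$ and $G\cong K_{3,3}$. This subcase escapes the reducibility framework because $V(H)\cup N_G(H)=V(G)$; I would instead show directly that $diam(\mathcal{I}(K_{3,3}))\le 3$. Concretely, for any label $\pi$ on $K_{3,3}$ with parts $\{x_1,x_2,x_3\}$ and $\{y_1,y_2,y_3\}$, set $f(x_i)=e_i$ (the $i$-th standard basis vector of $\mathbb{F}_2^3$) and $f(y_j)=(\pi(x_1y_j),\pi(x_2y_j),\pi(x_3y_j))^T$; then $f(x_i)\cdot f(y_j)=\pi(x_iy_j)$ is immediate. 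By Proposition~\ref{prop: diam vector} this contradicts $G$ being $3$-critical.

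In the remaining subcases the $w_i$ are not all equal, and I would mimic the treatment of Lemmas~\ref{lemma: reducible K4-}, \ref{lemma: reducible C3}, and \ref{lemma: reducible P3 labeled 1}. Let $f$ be the $3$-dim vector assignment on $G-H$ supplied by $3$-criticality and set $\mathcal{B}_f(w_i)=\mathcal{A}_f(w_i)$, shrinking to a singleton whenever needed so that $\{v\in N_G(H):|\mathcal{B}_f(v)|\ge 2\}$ is independent in $G-H$. Each $w_i$ has degree at most $2$ in $G-H$, giving $|\mathcal{A}_f(w_i)|\ge 2$, and in the subcase $|\{w_1,w_2,w_3\}|=2$ the vertex that plays the role of two coinciding $w_i$'s has degree $1$ in $G-H$, giving $|\mathcal{A}_f|\ge 4$ there. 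Lemma~\ref{lemma: reducible all zero adjacent edges} forbids $\mathbf{0}\in \mathcal{B}_f(w_i)$ when all of $w_i$'s remaining edges are $0$-labeled, and Lemmas~\ref{lemma: reducible K4-}, \ref{lemma: reducible C3}, and \ref{lemma: reducible P3 labeled 1} rule out further local configurations. The same style of computer enumeration used in the preceding lemmas then verifies that for every compatible label $\pi|_{V(H)\cup N_G(H)}$ and every admissible choice of boundary vectors, a valid extension $g$ to $V(H)\cup N_G(H)$ exists, contradicting Lemma~\ref{lemma: reducible}.

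The principal obstacle is the $G\cong K_{3,3}$ subcase, which cannot be attacked by reducibility because removing $H$ leaves essentially nothing; the explicit matrix-factorization argument above is the right workaround and exploits the highly constrained structure of $K_{3,3}$. The secondary difficulty is the bookkeeping in the $|\{w_1,w_2,w_3\}|=2$ subcase, where one must carefully choose which $\mathcal{B}$-sets to shrink so that the available-boundary-family condition holds while leaving enough slack for the computer-assisted enumeration to succeed; fortunately the extra freedom ($|\mathcal{A}|\ge 4$ for the repeated neighbor) compensates comfortably.
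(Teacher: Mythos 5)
Your proposal is structurally sound but takes a genuinely different decomposition from the paper's. The paper takes $H = G[\{v_0,v_1\}]$, i.e.\ only the size-$2$ side of the $K_{2,3}$, so that $N_G(H)$ is the size-$3$ side $\{u_0,u_1,u_2\}$: these are automatically pairwise non-adjacent (otherwise a triangle appears), each has degree $1$ in $G-H$ and hence $|\mathcal{A}_f(u_i)|\ge 4$, and after normalizing the label via Lemmas~\ref{lemma: reducible all zero adjacent edges} and \ref{lemma: reducible P3 labeled 1} (exactly one edge at each of $v_0,v_1$ is labeled $1$, going to distinct $u_i$'s, whence $\mathbf{0}\notin\mathcal{A}(u_1)$ and $\mathbf{0}\in\mathcal{A}(u_0),\mathcal{A}(u_2)$) a single computer check finishes the proof, with no case analysis on outside neighbours and no degenerate case. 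You instead take $H$ to be the whole $K_{2,3}$, which forces the split on $|\{w_1,w_2,w_3\}|$ and the separate treatment of $K_{3,3}$; your explicit assignment $f(x_i)=e_i$, $f(y_j)=(\pi(x_1y_j),\pi(x_2y_j),\pi(x_3y_j))^T$ for that case is correct and shows cleanly that $diam(\mathcal{I}(K_{3,3}))\le 3$ for every label.

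The caveat is that your non-degenerate branches carry a heavier and unverified burden. The $w_i$ may be mutually adjacent, forcing some $\mathcal{B}$-sets to singletons and cutting the available freedom, and the extension must now produce five vectors $\mathbf{a},\mathbf{b},\mathbf{c_1},\mathbf{c_2},\mathbf{c_3}$ satisfying nine bilinear constraints (with consistency issues when some $\mathbf{w_i}$ lies in $\mathcal{L}(\mathbf{a},\mathbf{b})$), rather than two vectors satisfying six. These reducibility instances are not the ones the paper's code checks, and you have not run the enumeration you appeal to; until that is done (or replaced by a hand argument), the non-$K_{3,3}$ branch is an assertion rather than a proof. The paper's choice of $H$ is the cleaner route precisely because it sidesteps all of this.
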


\begin{proof}
    Suppose  there exists a $K_{2,3}$ with vertices ${\{v_i\}}_{i=0,1} \cup {\{u_i\}}_{i=0,1,2}$ and $u_i v_j \in E(G)$ for every $i=0,1$ and $j=0,1,2$~(see Figure~\ref{fig: K23}).
    By Lemmas~\ref{lemma: reducible all zero adjacent edges} and~\ref{lemma: reducible P3 labelled 1}, without loss of generality, we can assume $\pi(v_0u_0) = \pi(v_1u_2) = 1$ and other edges in $K_{2,3}$ are labelled zero.
    By Lemma~\ref{lemma: reducible C3}, ${\{u_i\}}_{i=0,1,2}$ is an independent set.
    Let $H = G[\{v_0,v_1\}]$. Then $N_G(H) = \{u_0, u_1,u_2\}$. By hypothesis, $G-H$ admits a $3$-dim vector assignment $f$.
    Let $\mathcal{B}(u_i) = \mathcal{A}_{H,f}(u_i), i=0,1,2$. Then  ${(\mathcal{B}(u_i))}_{i=0,1,2}$ is an available boundary family.
         We have the following properties:
     \begin{enumerate}
        \item For each $i\in\{0,1,2\}$, $|\mathcal{B}(u_i)| \ge 4$ as $d_{G-H}(u_i) = 1$.
         \item By Lemma~\ref{lemma: reducible all zero adjacent edges}, $\mathbf{0} \notin \mathcal{B}(u_1)$ and $\mathbf{0} \in \mathcal{B}(u_i)$ for $i=0,2$.
     \end{enumerate}
     With the above properties, we claim that $H$ is reducible which is checked using a computer~(\href{https://github.com/handsome12138/InversionDiameter}{GitHub}).
     From this, we derive a contradiction with Lemma~\ref{lemma: reducible}.
\end{proof}

\begin{figure}[t]
	\begin{minipage}{0.49\linewidth}
		\vspace{3pt}
		\centerline{\includegraphics[width=0.7\textwidth]{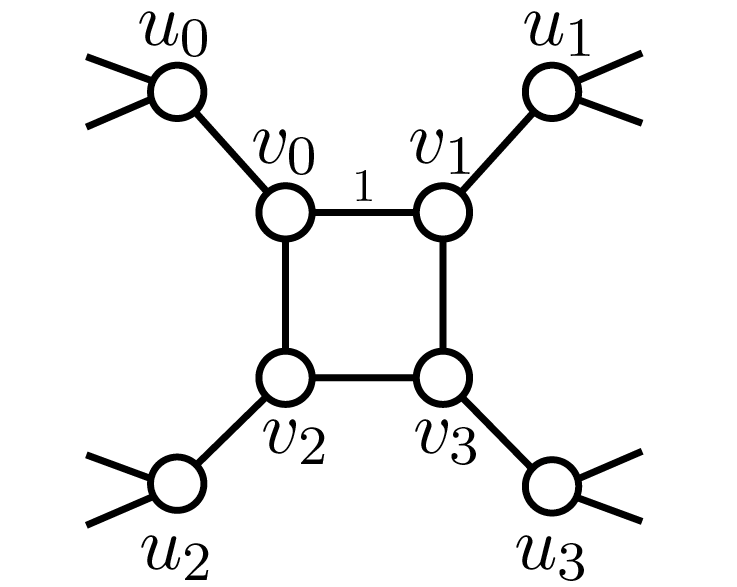}}
		\caption{$C_4$ with at least one edge labelled one in $G$.}\label{fig: C4}
	\end{minipage}
	\begin{minipage}{0.49\linewidth}
		\vspace{3pt}
		\centerline{\includegraphics[width=0.7\textwidth]{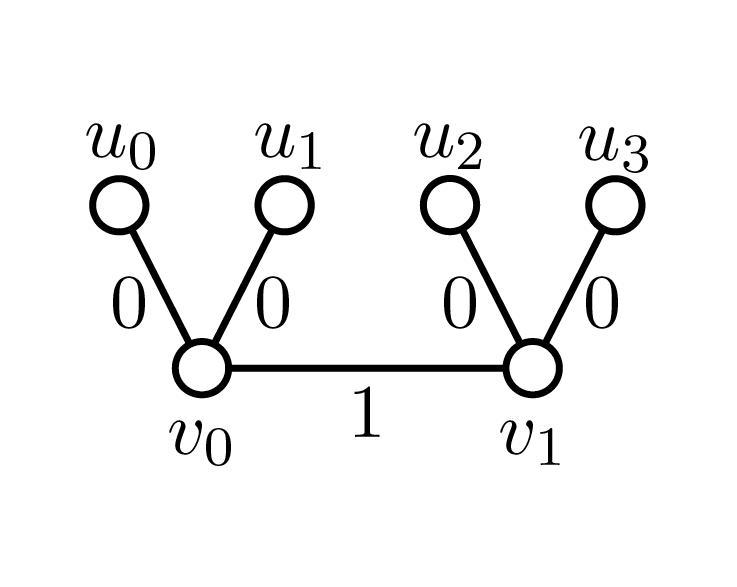}}
		\caption{Edge labelled one in $G$.}\label{fig: bridge}
	\end{minipage}
\end{figure}

\begin{lemma}\label{lemma: reducible C4 with label 1}
Let $G$ be a $4$-diameter-critical 3-regular graph respecting a bad labelling $\pi$.
Then there is no $C_4$ with at least one edge labelled one in $G$.
\end{lemma}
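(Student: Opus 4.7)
The plan is to mirror the pattern of Lemmas~\ref{lemma: reducible K4-}--\ref{lemma: reducible K23}: suppose for contradiction that such a $C_4$ exists, with vertices $v_0, v_1, v_2, v_3$ in cyclic order and, without loss of generality, $\pi(v_0 v_1)=1$. Since $G$ is $3$-regular and triangle-free (Lemma~\ref{lemma: reducible C3}), each $v_i$ has a unique third neighbour $u_i$ outside the $C_4$. I would first verify that $u_0, u_1, u_2, u_3$ are pairwise distinct: a coincidence $u_i=u_{i+1}$ yields the triangle $v_i v_{i+1} u_i$, contradicting Lemma~\ref{lemma: reducible C3}, and $u_0=u_2$ (or, symmetrically, $u_1=u_3$) creates an induced $K_{2,3}$ with parts $\{v_0, v_2\}$ and $\{u_0, v_1, v_3\}$, contradicting Lemma~\ref{lemma: reducible K23}.

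Next I would pin down the labels on the edges near $v_0 v_1$. By Lemma~\ref{lemma: reducible P3 labeled 1} applied to the $P_3$'s containing the edge $v_0 v_1$, we get $\pi(v_3 v_0)=\pi(v_0 u_0)=\pi(v_1 v_2)=\pi(v_1 u_1)=0$. Lemma~\ref{lemma: reducible all zero adjacent edges} applied to $v_2$ and $v_3$ then splits the analysis into two cases: $\pi(v_2 v_3)=1$, forcing $\pi(v_2 u_2)=\pi(v_3 u_3)=0$ by Lemma~\ref{lemma: reducible P3 labeled 1}; and $\pi(v_2 v_3)=0$, forcing $\pi(v_2 u_2)=\pi(v_3 u_3)=1$.

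Setting $H = G[\{v_0, v_1, v_2, v_3\}]$, we have $N_G(H) = \{u_0, u_1, u_2, u_3\}$, and the $3$-criticality of $G$ yields a $3$-dim vector assignment $f$ of $G-H$ with label $\pi|_{G-H}$. Since $d_{G-H}(u_i)=2$ for each $i$, each $\mathcal{A}_f(u_i)$ has size at least two. I would take $\mathcal{B}(u_i)=\mathcal{A}_f(u_i)$ on a maximal independent subset of $\{u_0, u_1, u_2, u_3\}$ in $G-H$ and $\mathcal{B}(u_i)=\{f(u_i)\}$ elsewhere, recording that $\mathbf{0} \notin \mathcal{B}(u_i)$ whenever some edge incident to $u_i$ outside $H$ is labelled one (by Lemma~\ref{lemma: reducible all zero adjacent edges}). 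Following the preceding lemmas, a computer enumeration over all available boundary families consistent with these constraints, and over the remaining free labels on edges incident to $N_G(H)$, would verify that in every case $H$ admits a valid extension to $G[V(H) \cup N_G(H)]$; that is, $H$ is reducible, contradicting Lemma~\ref{lemma: reducible}.

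The main obstacle is that the adjacencies among $u_0, u_1, u_2, u_3$ are largely unconstrained by the earlier lemmas: a consecutive edge $u_i u_{i+1}$ would only produce an induced $C_4$ on $\{v_i, v_{i+1}, u_i, u_{i+1}\}$ (and not a $K_4^-$, since Lemma~\ref{lemma: reducible C3} forbids the would-be chords $v_i u_{i+1}$ and $v_{i+1}u_i$), and the diagonals $u_0 u_2$, $u_1 u_3$ are similarly not excluded. Consequently the choice of the independent set underlying the available boundary family depends on which of these edges occur, and the bulk of the work lies in organising the case split across these subconfigurations together with the two label cases so that the same kind of exhaustive computer check used in Lemmas~\ref{lemma: reducible K4-}--\ref{lemma: reducible K23} verifies reducibility uniformly.
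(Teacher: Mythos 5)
Your proposal is correct and takes essentially the same route as the paper: reduce to showing that $H=C_4$ is reducible, use Lemmas~\ref{lemma: reducible C3}, \ref{lemma: reducible K23}, \ref{lemma: reducible P3 labeled 1} and \ref{lemma: reducible all zero adjacent edges} to force the distinctness of $u_0,\dots,u_3$ and the labels on and around the cycle, split on the label of the edge opposite $v_0v_1$, and delegate the verification of reducibility to a computer enumeration of available boundary families. The paper organizes the harder case (two opposite cycle edges labelled one, all pendant edges labelled zero) slightly differently --- it fixes a non-adjacent pair $\{u_0,u_t\}$, which exists by $3$-regularity, re-chooses their vectors within $\mathcal{A}(u_0),\mathcal{A}(u_t)$ to avoid the configuration of two coincident vector pairs, and only then runs the singleton check --- but this is subsumed by your enumeration with full $\mathcal{A}$-sets on an independent pair, provided you also invoke Lemma~\ref{lemma: nonzero, degree <= 2} so that the singleton sets never reduce to $\{\mathbf{0}\}$.
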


\begin{proof}  
    Suppose  there exists a copy of $C_4$ with vertices ${\{v_i\}}_{i=0,1,2,3}$ and with $\pi(v_0v_1) = 1$.
    Let $u_i$ be the neighbor of $v_i$ for $i=0,1,2,3$~(see Figure~\ref{fig: C4}).
    By Lemmas~\ref{lemma: reducible C3} and~\ref{lemma: reducible K23}, ${\{u_i\}}_{i=0,1,2,3}$ are distinct vertices.
    By Lemma~\ref{lemma: reducible P3 labelled 1}, $\pi(v_0v_2) = \pi(v_1v_3) = 0$.
    Let $H=C_4$. Then $N_G(H) = \{u_0, u_1,u_2, u_3\}$. 
    By hypothesis and Lemma~\ref{lemma: nonzero, degree <= 2}, $G-H$ admits a $3$-dim vector assignment $f$  with $\pi|_{G-H}$ such that  $f(u_i) \neq \mathbf{0}, i=0,1,2,3$.

    Let us first consider the case $\pi(v_2v_3) = 0$. In this case, let $\mathcal{B}(u_i) = \{f(u_i)\}, i=0,1,2,3$. Then ${(\mathcal{B}(u_i))}_{i=0,1,2,3}$ is an available boundary family. We claim $H$ is reducible with ${(\mathcal{B}(u_i))}_{i=0,1,2,3}$, which is proved by computer search~(\href{https://github.com/handsome12138/InversionDiameter}{GitHub}), and gives a contradiction with Lemma~\ref{lemma: reducible}.

    Now suppose $\pi(v_2v_3) = 1$. Then $\pi(v_i u_i) = 0, i=0,1,2,3$ by Lemma~\ref{lemma: reducible P3 labelled 1}.
    Since $G$ is 3-regular, there exists $t \in \{1,2,3\}$ such that $u_0u_t \notin E(G)$. Let $\mathcal{B}(u_i) = \mathcal{A}_{H,f}(u_i)$ for $i=0,t$ and $\mathcal{B}(u_i) = \{f(u_i)\}$ for other $i$.
    Then ${(\mathcal{B}(u_i))}_{i=0,1,2,3}$ is an available boundary family. Since $\pi(v_i u_i) = 0$ for each $i$, we have $\mathbf{0} \notin \mathcal{B}(u_i)$ by Lemma~\ref{lemma: reducible all zero adjacent edges}. Moreover, for $i=0,t$, we have $|\mathcal{B}(u_i) | \ge 2$ since $d_{G-H}(u_i) = 2$.
    We claim that we can choose $\mathbf{u}_i \in \mathcal{B}(u_i)$ for every $i\in \{0,1,2,3\}$ such that  $\mathbf{u}_{j_0} = \mathbf{u}_{j_1}$ and $\mathbf{u}_{j_2} = \mathbf{u}_{j_3}$ do not occur, where  $\{j_0,j_1,j_2,j_3\} = \{0,1,2,3\}$. The claim can be proved easily.

    Define $g: V(G-H) \rightarrow \mathbb{F}_2^3$ by letting $g(u_i) = \mathbf{u}_i, i=0,1,2,3$ and $g(v) = f(v)$ for all other vertices $v$. By Lemma~\ref{lemma: available boundary sets}, $g$ is a $3$-dim vector assignment of $G-H$ with $\pi|_{G-H}$.

    Let $\mathcal{B}_g(u_i) = g(u_i), i=0,1,2,3$, then ${(\mathcal{B}_g(u_i))}_{i=0,1,2,3}$ is an available boundary family.
    We claim $H$ is reducible with ${(\mathcal{B}_g(u_i))}_{i=0,1,2,3}$, which is proved by computer search~(\href{https://github.com/handsome12138/InversionDiameter}{GitHub}), and gives a contradiction.
\end{proof}

Now we have plenty of forbidden structures in $G$, and we can finally prove Theorem~\ref{thm: Delta 3 diam}.

\begin{proof}[of Theorem~\ref{thm: Delta 3 diam}] By contradiction.
    Let $G$ be a counterexample with the minimum number of vertices and, amongst all such examples, the minimum number of edges.
    Then $G$ is $4$-diameter-critical. 
    Let $\pi$ be a bad labelling of $G$.
    Since $\Delta(G)\le 3$, $G$ is 3-regular by Lemma~\ref{lemma: reducible 3-regular}.

     By Lemma~\ref{lemma: reducible all zero adjacent edges}, at least one edge is labelled one in $G$.
    Pick an edge $v_0v_1 \in E(G)$ labelled one.
    Let $u_0,u_1$ be the neighbors of $v_0$ and $u_2,u_3$ be the neighbors of $v_1$~(see Figure~\ref{fig: bridge}).
    By Lemmas~\ref{lemma: reducible C3} and~\ref{lemma: reducible C4 with label 1}, ${\{u_i\}}_{i=0,1,2,3}$ is an independent set.
    Let $H = G[\{v_0,v_1\}]$. Then $N_G(H) = \{u_0, u_1,u_2, u_3\}$. By hypothesis, $G-H$ admits a $3$-dim vector assignment $f$.
    Let $\mathcal{B}(u_i) = \mathcal{A}_{H,f}(u_i)$ for $i=0,1,2,3$. Then ${(\mathcal{B}(u_i))}_{i=0,1,2,3}$ is an available boundary family. 
    Since $d_{G-H}(u_i) = 2$ for all $i=0,1,2,3$, we have $|\mathcal{B}(u_i)| \ge 2$ for all $i=0,1,2,3$.
    By Lemma~\ref{lemma: reducible all zero adjacent edges}, at least one edge adjacent to $u_i$ is labelled one for every $i=0,1,2,3$.
    We know $v_0u_0, v_0u_1, v_1u_2, v_1u_3$ are labelled zero.
    Thus, there is at least one edge adjacent to each $u_i$ in $G-H$ labelled one.
    Then by definition, $\mathbf{0} \notin \mathcal{B}(u_i)$ for every $i=0,1,2,3$.
    Then we claim that $H$ is reducible, which is proved using a computer by enumerating all possibilities for $\mathcal{B}(u_i)$ for each $i=1,2,3,4$~(\href{https://github.com/handsome12138/InversionDiameter}{GitHub}), and gives a contradiction with Lemma~\ref{lemma: reducible}.
\end{proof}


\acknowledgements
\label{sec:ack}
The authors would like to thank the referee for their careful reading and valuable comments which help to improve the presentation of this paper.
Y. Yang is supported by the Fundamental Research Funds for the Central University~(Grant 500423306) in China.
M. Lu is supported by the National Natural Science Foundation of China~(Grant 12571372).

\nocite{*}
\bibliographystyle{abbrvnat}
\bibliography{sample-dmtcs}
\label{sec:biblio}

\end{document}